\newcommand{\PP}{{\mathbb{P}}}
\newcommand{\ga}{\alpha}
\newcommand{\gk}{\kappa}
\newcommand{\gt}{\tau}
\newcommand{\func }{\mathord{:}}
\newcommand{\restricted}{\mathord{\restriction}}
\newcommand{\ordered}[1]{\ensuremath{\langle #1 \rangle}}
\newcommand{\ordof}[2]{\ensuremath{\ordered{ #1 \mid #2 }}}
\DeclareMathOperator{\len}{l}
\DeclareMathOperator{\dom}{dom}
\DeclareMathOperator{\cf}{cf}
\DeclareMathOperator{\Add}{Add}
\DeclareMathOperator{\supp}{supp}
\DeclareMathOperator{\Ult}{Ult}
\DeclareMathOperator{\limdir}{lim\ dir}
\DeclareMathOperator{\id}{id}
\DeclareMathOperator{\inacc}{inacc}
\newcommand{\Es}{{\ensuremath{\bar{E}}\/}}
\newcommand{\VS}{V^*}
\newcommand{\MS}{{M^*}}
\newcommand{\NS}{{N^*}}
\newcommand{\MSt}{{M^*_\gt}}
\newcommand{\Mt}{{M_\gt}}
\newcommand{\ME}{{M_{\Es}}}
\newcommand{\MSE}{{M^*_{\Es}}}
\def\MPB{{\mathbb{P}}}
\def\k{\kappa}
\def\l{\lambda}
\def\a{\alpha}
\def\b{\beta}
\newtheorem{theorem}{Theorem}[section]
\newtheorem{lemma}[theorem]{Lemma}
\newtheorem{corollary}[theorem]{Corollary}
\newtheorem{remark}[theorem]{Remark}
\newtheorem{claim}[theorem]{Claim}
\numberwithin{equation}{section}
\def\l{\lambda}
\def\rmark{\mbox{$\rm\bf\rule{0.06em}{1.45ex}\kern-0.05em R$}}
\def\pmark{\mbox{$\rm\bf\rule{0.06em}{1.45ex}\kern-0.05em P$}}
\def\nmark{\mbox{$\rm\bf\rule{0.06em}{1.45ex}\kern-0.05em N$}}
\def\vdash{\mbox{$\rm\| \kern-0.13em -$}}
\def\l{\lambda}
\def\rmark{\mbox{$\rm\bf\rule{0.06em}{1.45ex}\kern-0.05em R$}}
\def\pmark{\mbox{$\rm\bf\rule{0.06em}{1.45ex}\kern-0.05em P$}}
\def\nmark{\mbox{$\rm\bf\rule{0.06em}{1.45ex}\kern-0.05em N$}}
\def\vdash{\mbox{$\rm\| \kern-0.13em -$}}
\newcommand{\lusim}[1]{\smash{\underset{\raisebox{1.2pt}[0cm][0cm]{$\sim$}}
{{#1}}}}
\begin{document}

\title[On Foreman's maximality principle]{On Foreman's maximality principle}

\author[Mohammad Golshani and Yair Hayut]{Mohammad
  Golshani and Yair Hayut}

\thanks{The first author's research was in part supported by a grant from IPM (No. 91030417).}


\begin{abstract}
In this paper we consider  Foreman's maximality principle, which  says that any non-trivial forcing notion either adds a new real or collapses some cardinals. We prove the consistency of some of its consequences. We observe that it is consistent that every $c.c.c.$ forcing adds a real and that for every uncountable regular cardinal $\kappa$, every $\kappa$-closed forcing of size $2^{<\kappa}$ collapses some cardinal.
\end{abstract}
\maketitle

\section{introduction}
Foreman's maximality principle \cite{foreman-magidor-shelah} says that any non-trivial forcing notion either adds a new real or collapses some cardinal.  The consistency of this principle was asked by Foreman-Magidor-Shelah \cite{foreman-magidor-shelah}, who showed  that if $0^\sharp$ exists, then any non-trivial constructible forcing notion adds a new real over $V$ (see also \cite{stanley}, where a generalization of this result is proved).

 In this paper we consider the following two consequences of Foreman's maximality principle, and prove some consistency results related to them:
\begin{enumerate}
\item Any non-trivial $c.c.c.$ forcing notion adds a new real.

\item For every uncountable regular cardinal $\kappa$, every $\kappa$-closed forcing of size $2^{<\kappa}$ collapses some cardinal.
\end{enumerate}

We show that statement $(1)$ is equivalent to the assertion ``there are no Souslin trees'', and hence by  Solovay-Tennenbaum \cite{solovay-tennenbaum},  it is consistent that all non-trivial $c.c.c.$ forcing notion add a new real.

We also consider statement $(2)$, and prove that it is consistent, relative to the existence of a strong cardinal, that for all uncountable cardinals $\kappa,$ all $\k$-closed (and in fact all $\k$-strategically closed) forcing notions of size $\leq 2^{<\kappa}$ collapse $2^{<\k}.$ In such a model $GCH$ must fail everywhere, and hence we need some large cardinals to get the result. In the opposite direction, we build, assuming some large cardinals, a model in which $GCH$ fails everywhere, but for each infinite cardinal $\k,$ there exists a $\k^+$-closed forcing notion of size $2^\k$ which preserves all cardinals. Our work extends an earlier work of Foreman and Woodin \cite{foreman-woodin} by reducing their use of a supercompact cardinal to a strong cardinal.

In order to avoid trivialities, in this paper, the phrase ``forcing notion" is used only for separative non-atomic forcing notions.

\section{Consistency of any $c.c.c.$ forcing notion adding a new real}

In this section we consider statement $(1)$, and prove its consistency.

\begin{theorem}
Souslin hypothesis (SH) holds iff any non-trivial $c.c.c.$ forcing notion adds a new real.
\end{theorem}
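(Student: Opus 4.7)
The statement is an equivalence, and I treat the two directions separately.

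\emph{Direction ($\Leftarrow$) (easy):} If every non-trivial c.c.c.\ forcing adds a real, then SH holds. I argue contrapositively: given a Souslin tree $T$, regard $T$ as a forcing notion, where $p$ is stronger than $q$ iff $p$ extends $q$ in the tree order. Antichains in the forcing are antichains in the tree, so $T$ is c.c.c.\ by the Souslin property, and it is clearly non-trivial. Moreover $T$ adds no reals: after passing to a normal refinement, given dense open $D_n \subseteq T$ and $s \in T$, each $D_n$ contains a countable maximal antichain $A_n$ by c.c.c.; the union $\bigcup_n A_n$ sits below some level $\gamma < \omega_1$, and by normality every node at level $\gamma$ extending $s$ extends some element of each $A_n$ and so lies in $\bigcap_n D_n$. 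Hence $T$ is a non-trivial c.c.c.\ forcing adding no reals, contradicting the hypothesis, so no Souslin tree can exist.

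\emph{Direction ($\Rightarrow$) (hard):} Assume SH; I show every non-trivial c.c.c.\ forcing $\mathbb{P}$ adds a real. Suppose not. Passing to the Boolean completion, I may assume $\mathbb{P} = B$ is a complete, c.c.c., atomless Boolean algebra that is countably distributive (the latter being equivalent to not adding reals). Construct by recursion on $\alpha < \omega_1$ a system of maximal antichains $A_\alpha \subseteq B \setminus \{0\}$: set $A_0 = \{1\}$; at a successor $\alpha+1$, split each $b \in A_\alpha$ into two disjoint nonzero pieces (possible by atomlessness); at a limit $\alpha$, take the nonzero meets $\bigwedge_{\beta < \alpha} b_\beta$ along coherent branches through $\langle A_\beta : \beta < \alpha \rangle$. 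Countable distributivity guarantees that these meets join to $1$ at limits, so $A_\alpha$ is again a maximal antichain, and the c.c.c.\ ensures $|A_\alpha| \le \omega$. The resulting $T = \bigcup_{\alpha < \omega_1} A_\alpha$, with the reverse Boolean order, is a tree of height $\omega_1$ with countable levels, and antichains in $T$ are antichains in $B$, hence countable.

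The main technical obstacle is ensuring that $T$ has no uncountable branch, so that $T$ is genuinely Souslin and contradicts SH. I handle this by interleaving a diagonalization into the construction: fix in $V$ a well-ordering of $B$, enumerate potential $\omega_1$-branches via this well-ordering, and at appropriately chosen stages refine the splitting at successor steps (or the choice of retained meets at limit steps) to defeat each enumerated branch. The bookkeeping---ensuring the construction still yields maximal antichains at every level while killing every uncountable branch that would otherwise survive---is where the real work lies, and it is the step I would expect to be the most delicate. Once it is done, the tree $T$ is Souslin in $V$, contradicting SH.
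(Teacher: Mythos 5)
Your ($\Leftarrow$) direction is correct and is exactly the paper's ``trivial'' direction, with the standard distributivity argument for Souslin tree forcing spelled out. Your ($\Rightarrow$) direction also follows the same route as the paper: pass to the Boolean completion, observe that it is a c.c.c.\ atomless countably distributive complete Boolean algebra, and extract a Souslin tree from it (the paper simply cites Jech's Handbook chapter for this last step, whereas you attempt it by hand). However, your execution of that last step has a genuine gap, and the way you propose to fill it would not work.

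The gap is the ``no uncountable branch'' step, which you explicitly defer. Your proposed fix --- enumerate the potential $\omega_1$-branches and diagonalize against them during the construction --- is unworkable as stated: there may be $2^{\aleph_1}$ candidate branches, so they cannot be listed in order type $\omega_1$, and in any case a branch of $T$ is not a well-defined object to diagonalize against until $T$ has been completely built, so the bookkeeping is circular. Fortunately, no diagonalization is needed: the c.c.c.\ kills uncountable branches for free, provided you split \emph{every} node at \emph{every} successor stage (which your construction already does). Indeed, if $\langle b_\alpha : \alpha < \omega_1 \rangle$ were a chain of $T$ meeting every level, then $b_{\alpha+1} < b_\alpha$ strictly for all $\alpha$, so each $c_\alpha = b_\alpha \wedge \neg b_{\alpha+1}$ is nonzero; for $\alpha < \beta$ we have $c_\beta \le b_\beta \le b_{\alpha+1}$ while $c_\alpha \wedge b_{\alpha+1} = 0$, so $\{ c_\alpha : \alpha < \omega_1 \}$ is an uncountable antichain in $B$, contradicting the c.c.c. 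With this observation your construction closes up: levels are countable by c.c.c., tree-antichains are Boolean antichains (incomparable nodes are disjoint because the levels refine one another), and limit levels are maximal antichains by distributivity --- noting, as the paper is careful to flag, that under c.c.c.\ the $(\omega,\omega)$-distributivity you get from ``adds no reals'' upgrades to the $(\omega,\infty)$-distributivity that the limit step actually uses. So your tree is Souslin and the proof is complete, but only after replacing your diagonalization by the antichain argument above.
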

\begin{proof}
One direction is trivial, since a Souslin tree, considered as a forcing notion, is $c.c.c.$ and adds no new reals.

For the other direction suppose there is a non-trivial $c.c.c.$ forcing notion $\mathbb{P}$ which adds no new reals. Let $\mathbb{B}=R.O(\mathbb{P})$ be the boolean completion of $\mathbb{P}$. $\mathbb{B}$ is a $c.c.c.$ complete Boolean algebra which is $(\omega, \omega)$-distributive, hence it is in fact $(\omega, \infty)$-distributive, thus it is a Souslin algebra, which implies the existence of a Souslin tree (see \cite{jech}). Hence SH fails.
\end{proof}
\begin{remark}
The above theorem is also proved by Shelah \cite{shelah2}, by completely different methods.
\end{remark}
As a corollary of the above theorem, and results of Solovay-Tennenbaum \cite{solovay-tennenbaum} (see also \cite{shelah1}), we have the following.
\begin{corollary}
It is consistent that any non-trivial $c.c.c.$ forcing notion adds a new real.
\end{corollary}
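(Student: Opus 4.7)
The plan is to combine the theorem just proved with a classical consistency result. Since the theorem of this section established the equivalence
\[
\text{SH} \iff \text{every non-trivial } c.c.c. \text{ forcing notion adds a new real},
\]
it suffices to exhibit a model of SH, and then read the corollary off immediately.

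First, I would invoke the Solovay--Tennenbaum theorem \cite{solovay-tennenbaum}: starting from a model of ZFC together with an inaccessible cardinal $\kappa$ (or, more carefully, from ZFC alone via the standard finite-support iteration of length $\omega_2$ using a bookkeeping device), one can iteratively force with all $c.c.c.$ forcing notions of size less than $\kappa$ to obtain a model of $\mathrm{MA} + 2^{\aleph_0} > \aleph_1$. The key point is that in such a model, $\mathrm{MA}_{\aleph_1}$ implies that there are no Souslin trees; equivalently, SH holds. This is proved by applying $\mathrm{MA}$ to the Souslin tree itself (viewed as a $c.c.c.$ forcing) to produce an uncountable antichain, yielding a contradiction.

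Having arrived in a model where SH holds, the previous theorem immediately gives that every non-trivial $c.c.c.$ forcing notion in this model adds a new real. This is exactly the statement of the corollary. The main obstacle, if any, is simply a reference obstacle: one must ensure the reader knows that the Solovay--Tennenbaum construction genuinely produces a model of SH, which is standard and can be found in \cite{solovay-tennenbaum} or in \cite{shelah1} as cited.
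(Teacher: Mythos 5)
Your proposal is correct and follows exactly the route the paper takes: the corollary is obtained by combining the equivalence of Theorem 2.1 with the Solovay--Tennenbaum consistency of SH (via $\mathrm{MA}_{\aleph_1}$). The paper states this in one line without elaboration, so your additional detail about how $\mathrm{MA}_{\aleph_1}$ kills Souslin trees is accurate but not a departure from the intended argument.
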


\section{Consistently, for every uncountable $\kappa,$ every $\kappa$-closed forcing of  size $2^{<\kappa}$ collapses some cardinal}
In this section, we consider statement $(2),$ and prove the following consistency result.
\begin{theorem}
Assuming the existence of an $\aleph_{\kappa^{++}}$-strong cardinal $\kappa$, it is consistent that for all uncountable cardinals $\lambda,$ any non-trivial $\lambda$-closed forcing notion of size $\leq 2^{<\lambda}$ collapses some cardinal.
\end{theorem}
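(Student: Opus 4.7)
The plan is to adapt the Foreman--Woodin construction, replacing their supercompact-based preparation with an extender-based Prikry-type forcing that requires only the hypothesized $\aleph_{\kappa^{++}}$-strength. First I would perform a reverse Easton preparation $\mathbb{P}_\kappa$ below $\kappa$ which, at each appropriate inaccessible stage $\alpha$, blows up $2^\alpha$ via an $\alpha$-directed-closed Cohen-style forcing; the iteration is arranged so that in the intermediate model $2^{<\lambda}$ is pushed up to a prescribed target value for every regular uncountable $\lambda < \kappa$, while the $\aleph_{\kappa^{++}}$-strength of $\kappa$ is preserved by a standard Silver/Laver-style lifting argument.

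Next, at stage $\kappa$ I would use the $(\kappa,\aleph_{\kappa^{++}})$-extender witnessing the strength to apply a Gitik-style extender-based forcing that blows up $2^\kappa$ to (at least) $\aleph_{\kappa^{++}}$ while preserving cardinals; above $\kappa$, a standard Easton-support iteration continues to ensure GCH fails at every larger cardinal. The outcome is a model $W$ in which $2^{<\lambda}$ is large and ``generically added'' for every uncountable $\lambda$.

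The main combinatorial content is a key lemma asserting that, in $W$, for every regular uncountable $\lambda$, every non-trivial $\lambda$-strategically closed forcing $\mathbb{Q}$ with $|\mathbb{Q}|\le 2^{<\lambda}$ collapses $2^{<\lambda}$. The idea is to analyze $\mathbb{Q}$ as a sub-quotient of the Cohen-like products used in the preparation: any such $\mathbb{Q}$ is absorbed by a forcing adding Cohen subsets of $\lambda$, and a winning strategy for Player II, together with the density of ground-model conditions, lets one read off a surjection from $\lambda$ onto $2^{<\lambda}$ in the generic extension.

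The main obstacle will be this key lemma, particularly when $\lambda$ lies in the critical range $[\kappa,\aleph_{\kappa^{++}}]$, where the extender-based forcing's internal structure must be understood in fine detail. In Foreman--Woodin's proof, supercompactness provides the reflection that simplifies the analysis; here one must substitute an explicit analysis of the extender-based forcing and its quotients, using the embedding $j : V \to M$ to argue that any hypothetical cardinal-preserving $\lambda$-closed $\mathbb{Q}$ would lift to an absurd object in $M[j(G)]$. Extending the argument from $\lambda$-closure to $\lambda$-strategic closure adds a further layer of game-theoretic bookkeeping, which parallels the standard refinement of closure arguments to strategic-closure arguments but must be carried out uniformly across all regular uncountable $\lambda$.
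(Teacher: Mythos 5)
Your overall architecture (build a global failure of GCH from the strength hypothesis, then prove a ``key lemma'' that every small $\lambda$-closed forcing collapses $2^{<\lambda}$) matches the paper's in outline, but the proposed proof of the key lemma has a genuine gap, and it is exactly where the essential idea is missing: what matters is not that $2^{<\lambda}$ is large but that it is \emph{singular}. The paper's collapsing lemma is a pure ZFC statement: if $2^{\mu}$ is singular, then every non-trivial $\mu^+$-closed forcing $\mathbb{P}$ of size $2^{\mu}$ collapses $2^{\mu}$. Its proof has nothing to do with the preparation forcing, absorption, or lifted embeddings; one takes a new sequence $\tau$ of ordinals of minimal length $\lambda$, notes that $\lambda$ is regular while $\mathbb{P}$ (being separative, non-atomic, of size $2^\mu$) adds a new sequence of length $\le 2^\mu$, so singularity forces $\lambda<2^{\mu}$, and then one builds in the ground model (by a tree argument using $\mu^+$-closure and a bookkeeping enumeration of $\mathbb{P}\times{}^{\mu}2$) a function $F$ from initial segments of $\tau$ onto ${}^{\mu}2$ so that densely many conditions realize every prescribed value; in the extension $\{F(\tau\restriction\beta):\beta<\lambda\}$ surjects $\lambda$ onto $(2^{\mu})^{V}$. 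Your proposed mechanism --- absorbing $\mathbb{Q}$ into a Cohen-style forcing on $\lambda$ and lifting $j$ to get ``an absurd object'' --- cannot replace this: a $\lambda$-closed forcing of size $2^{<\lambda}>\lambda$ does not embed completely into $\Add(\lambda,\cdot)$ in general, and even a complete embedding into a collapsing forcing would not show that $\mathbb{Q}$ itself collapses anything. Worse, the key lemma as you state it is simply \emph{false} in a model where GCH fails everywhere but $\theta=2^{<\lambda}$ is regular with $2^{<\theta}=\theta$: Theorem 4.1 of the paper then produces a non-trivial $\lambda$-closed, $\theta$-distributive, $\theta^+$-c.c.\ forcing of size $\theta=2^{<\lambda}$ preserving all cardinals (this is the point of the paper's Section 6 model). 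So no argument that uses only ``GCH fails everywhere'' can succeed.

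Consequently the model construction must be aimed at a sharper target than yours: one needs $2^{\mu}$ \emph{singular} for every $\mu$ below some inaccessible, and one must then pass to $V_{\kappa_*}$ for the \emph{least} inaccessible $\kappa_*$, so that every uncountable regular $\lambda$ is a successor $\mu^{+}$ and $2^{<\lambda}=2^{\mu}$ is singular (for singular $\lambda$, $\lambda$-closed implies $\lambda^+$-closed, so this case is reduced to the successor case). The paper achieves the singular values with a single extender-based Radin forcing at $\kappa$ (with interleaved Cohen forcings prescribed by the function $R$, prepared by a product of Cohen forcings at $\kappa^{+},\kappa^{++},\kappa^{+3}$ to construct the guiding generics), which arranges $2^{\mu}=\aleph_{\eta}$ for suitable singular $\aleph_{\eta}$ between successive Radin points; your reverse Easton iteration below $\kappa$ plus an Easton iteration above $\kappa$ neither controls the cofinality of $2^{\mu}$ nor restricts to a segment with no inaccessibles, so it does not set up the cardinal arithmetic the collapsing lemma needs. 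If you repair the proposal by (i) proving the ZFC singular-continuum collapsing lemma and (ii) redirecting the forcing construction to make $2^{\mu}$ singular everywhere below the least inaccessible and cutting there, you recover the paper's argument.
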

\begin{remark}
The conclusion of the theorem implies $GCH$ fails everywhere, so some very large cardinals are needed for the theorem.
\end{remark}
\begin{proof}
To prove the theorem, we need two lemmas, which are of some independent interest.

\begin{lemma}\label{lem: singular continuum collapsing} if $2^{\kappa}$ is singular, then every non-trivial $\kappa^+$-closed forcing of size $2^{\kappa}$ collapses $2^{\kappa}$.
\end{lemma}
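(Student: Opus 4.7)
The plan is to argue by contradiction: suppose $\mathbb{P}$ preserves $\lambda := 2^{\kappa}$ and derive a contradiction from the singularity of $\lambda$.

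\emph{Setup.} Let $\mu := \cf^V(\lambda)$. By K\"onig's theorem $\mu > \kappa$, so $\mu \geq \kappa^+$. Fix in $V$ a strictly increasing continuous cofinal sequence $\langle \lambda_i : i < \mu \rangle$ in $\lambda$, an enumeration $\mathbb{P} = \{p_\alpha : \alpha < \lambda\}$, and set $P_i := \{p_\alpha : \alpha < \lambda_i\}$, so that $\mathbb{P} = \bigcup_{i < \mu} P_i$ with each $|P_i| < \lambda$. Using $\kappa^+$-closure at limit levels $\leq \kappa$ and non-triviality at successors, a binary splitting tree of height $\kappa$ below any condition yields $2^\kappa = \lambda$ pairwise incompatible extensions; hence below every condition there is an antichain of size $\lambda$, and consequently $\mathbb{P} \setminus P_i$ is dense in $\mathbb{P}$ for every $i < \mu$.

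\emph{Main argument.} The goal is to produce in $V[G]$ a surjection from some cardinal $<\lambda$ onto $\mathcal{P}^V(\kappa) = \mathcal{P}^{V[G]}(\kappa)$. Since no new subsets of $\kappa$ are added and $(2^\kappa)^V = \lambda$, we always have $(2^\kappa)^{V[G]} \geq \lambda$; so such a surjection would give $(2^\kappa)^{V[G]} < \lambda$, contradicting the assumption that $\lambda$ is preserved (which would force $(2^\kappa)^{V[G]} = \lambda$). The natural device exploits the coincidence $|\mathbb{P}| = \lambda = |\mathcal{P}(\kappa)|$: fix in $V$ a labeling $\Phi : \mathbb{P} \to \mathcal{P}(\kappa)$ with the density property that below every condition $p$ and for every $r \in \mathcal{P}^V(\kappa)$, some extension of $p$ is labeled by $r$. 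Given such $\Phi$, the generic $G$ then reads off, along the $\mu$-decomposition, a $\mu$-sequence of labels whose range is all of $\mathcal{P}^V(\kappa)$, yielding the desired surjection $\mu \twoheadrightarrow \mathcal{P}^V(\kappa)$.

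\emph{Main obstacle.} The crux is the construction of the dense labeling $\Phi$. For canonical forcings like $\Add(\kappa^+, 1)$ this is automatic — reals decode from $\kappa$-length windows of the generic function — but for a general $\kappa^+$-closed $\mathbb{P}$ of size $2^\kappa$ one must carry out a bookkeeping argument distributing the $\lambda$ labels across the $\lambda$ extension-slots (supplied by the antichains below each condition) uniformly over the $\mu$ stages. The singularity of $\lambda$ is essential here: the decomposition $\mathbb{P} = \bigcup_{i<\mu} P_i$ into $\mu < \lambda$ pieces is precisely what gives $G$ enough room to accumulate all of $\mathcal{P}(\kappa)$ in its label-range, and this is the place where the hypothesis ``$2^\kappa$ singular'' is used rather than merely $|\mathbb{P}| = 2^\kappa$.
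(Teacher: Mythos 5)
Your high-level target --- build in $V$ a densely-defined labeling with values in ${}^{\kappa}2$ and let the generic read off a surjection from some cardinal $<2^{\kappa}$ onto $({}^{\kappa}2)^{V}$ --- is indeed the paper's strategy, but the two steps you defer or leave vague are the entire substance of the proof, and the way you set them up does not work. The first problem is the domain of the labeling. You attach labels to \emph{conditions} and hope that ``along the $\mu$-decomposition'' the generic produces a $\mu$-sequence of labels covering all of $\mathcal{P}^{V}(\kappa)$. But $G$ is just a directed subset of $\mathbb{P}$ of size up to $2^{\kappa}$, and the decomposition $\mathbb{P}=\bigcup_{i<\mu}P_{i}$ gives no canonical way to extract from $G$ a $\mu$-indexed (or even $<2^{\kappa}$-indexed) list of conditions whose labels exhaust $\mathcal{P}^{V}(\kappa)$: choosing one condition from each $G\cap P_{i}$ yields only $\mu$ labels with no reason to be surjective, while taking all of $G$ gives no index set of size below $2^{\kappa}$. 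The paper's key idea, missing from your proposal, is to label not conditions but the possible \emph{initial segments of a new sequence $\tau$ of ordinals of minimal length} $\lambda$. Minimality makes $\lambda$ regular, and this is where the singularity of $2^{\kappa}$ actually enters: non-triviality gives a new subset of $|\mathbb{P}|=2^{\kappa}$, so $\lambda\le 2^{\kappa}$, and regularity then forces $\lambda<2^{\kappa}$. Minimality also puts every $\tau\restriction\beta$ in $V$, so $\beta\mapsto F(\tau\restriction\beta)$ is automatically a $\lambda$-indexed sequence in $V[G]$, and the density, for each $x$, of the set of $q$ forcing $\tau\restriction\beta=\check a$ for some $a$ with $F(a)=x$ makes its range all of $({}^{\kappa}2)^{V}$. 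Your proposed use of singularity (splitting $\mathbb{P}$ into $\mu$ small pieces) is not the one that makes the argument go through.

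The second problem is that the construction of the labeling itself, which you explicitly flag as ``the main obstacle'' and do not carry out, is most of the work. In the paper it is a transfinite recursion of length $2^{\kappa}$ over an enumeration of $\mathbb{P}\times{}^{\kappa}2$: at stage $\alpha$ one builds below $p_{\alpha}$ a binary tree of height $\kappa$ whose nodes force pairwise distinct initial segments of $\tau$ (possible because $\tau\notin V$ while all its initial segments lie in $V$), uses $\kappa^{+}$-closure to realize all $2^{\kappa}$ branches, and, since fewer than $2^{\kappa}$ values of $F$ have been committed so far, finds a branch whose associated initial segment is still unlabeled and assigns it the label $x_{\alpha}$. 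Your splitting-tree observation is exactly the right ingredient for this recursion, but as written you use it only to conclude the (irrelevant) density of $\mathbb{P}\setminus P_{i}$. So the proposal correctly identifies the shape of the argument but omits both the mechanism that makes the surjection small and the recursion that makes the labeling total and dense.
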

\begin{proof}
Let $\mathbb{P}$ be a non-trivial $\kappa^+$-closed forcing notion of size $2^{\kappa}$. Then forcing with $\mathbb{P}$ adds a new sequence $\tau$ of ordinals of size $\lambda < 2^{\kappa}$ (since the minimal such $\lambda$ must be regular). We will encode $2^\kappa$ into $\tau$.

If we choose $\lambda$ to be minimal, every initial segment of $\tau$ is in $V$. Now, we can define a function $F\in V$ from all possible initial segments of $\tau$ onto $^{\kappa}2$, such that for every $p\in \mathbb{P}$ and every $x\in$~$^{\kappa}2$ there is $q\leq p$ and $\beta < \lambda$ such that $q\Vdash \tau \restriction \beta = \check{a}$ and $F(a)=x$.

Let $\langle (p_i, x_i) : i < 2^{\kappa}\rangle$ enumerate $\mathbb{P}\times$~$ ^{\kappa}2$. For every $\alpha < 2^{\kappa}$, we use the $\kappa^+$-closure of $\mathbb{P}$ and the fact that $\tau \notin V$ in order to find $q \leq p_\alpha$ such that $q \neq p_i$ for every $i < \alpha$, $q\Vdash \tau \restriction \beta = \check{a}$ (for some $\beta$) and $F(a)$ is not determined yet, and set $F(a)=x_\alpha$.

Let $p = p_\alpha \in \mathbb{P}$. We start by building a tree of $2^{<\kappa}$ incompatible conditions $q_s,\, s\in$~$^{<\kappa}2$ such that $q_\emptyset = p$ and for every $s\in$~$^{<\kappa}2$, $q_{s^\smallfrown (0)}, q_{s^\smallfrown(1)} \leq q_s$, there is $\beta_s < \lambda$ such that $q_{s^\smallfrown(i)} \Vdash \tau \restriction \check\beta_s = \check{a}_{s^\smallfrown(i)}$ for $i\in\{0,1\}$, $a_{s^\smallfrown(0)} \neq a_{s ^\smallfrown(1)}$. This is possible since $\tau \notin V$ but every initial segment of it is in $V$.

For every $f\in$~$^{\kappa}2$, let us pick a condition $q_f \in \mathbb{P}$ such that for all $\alpha<\kappa, q_f \leq q_{f\restriction \alpha}$ (this is possible by the closure of $\mathbb{P}$). Let $\beta_f = \sup_{\alpha < \kappa} \beta_{f\restriction \alpha}$. Then $\forall f \in$~$^{\kappa}2,$ $q_f\Vdash \tau \restriction \check\beta_f = \check{a_f}$, where $a_f = \bigcup_{\alpha < \kappa} a_{f\restriction \alpha}$, and for every $f\neq f^\prime$, $a_f \neq a_{f^\prime}$. Since we chose already only $|\alpha |<2^{\kappa}$ values for $F$, there must be some $f\in$~$^{\kappa}2$ such that $q_f \neq p_i$ for every $i < \alpha$ and $F(a_f )$ was not already defined.

At the end of this process, there might be still possible initial segment of $\tau$ such that $F$ is undefined on, so we define $F(x)$ to be arbitrary on those values.

By density arguments, in $V[G]$, $\{F(\tau \restriction \beta) : \beta < \lambda\} = (2^\kappa)^V$.
\end{proof}
We can slightly generalize the lemma, and show that every $\kappa^+$-strategically closed forcing of size $2^\kappa$ collapses a cardinal. The argument is the same, and the only difference is in the construction of the function $F$. There, in the $\alpha$-th step, when we build the tree of extensions of $p_\a,$ we use the strategy in order to ensure that in limit stages of the tree we can always find $q_s$, stronger than $q_{s\restriction \alpha}$ for every $\alpha < \dom(s)$. This means that when picking $q_{s^\smallfrown(\epsilon)}$ for $\epsilon \in \{0,1\}$ we first extend $q_s$ into two incompatible conditions that force different information about $\tau$ as we did in the former case, and then extend those conditions to $q_{s^\smallfrown(0)}, q_{s^\smallfrown(1)}$ according to the strategy, assuming that the last step was made by the bad player. The limit stages are completely defined by the strategy, since the good player plays those steps.

The lemma also holds in a slightly more general setting. For every regular cardinal $\kappa$, if $2^{<\kappa}$ is singular then any $\kappa$-closed forcing of cardinality $2^{<\kappa}$ collapses some cardinal. The proof is essentially the same.

We also need the following.
\begin{lemma}\label{lem: singular continuum everywhere}
Assume $GCH$ holds and $\kappa$ is $\aleph_{\kappa^{++}}$-strong. Then there is a generic extension of the universe in which $\kappa$ remains inaccessible and for all infinite cardinals $\lambda<\kappa, 2^{\l}$ is a singular cardinal.
\end{lemma}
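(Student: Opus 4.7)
The plan is to realize the desired model as a reverse Easton iteration
\[
\mathbb{P}_\kappa = \langle \mathbb{P}_\alpha, \dot{\mathbb{Q}}_\alpha : \alpha < \kappa \rangle
\]
of length $\kappa$ with Easton support, in which at each regular uncountable $\lambda < \kappa$ we take $\dot{\mathbb{Q}}_\lambda$ to be a $\mathbb{P}_\lambda$-name for the Cohen forcing $\Add(\lambda, \mu_\lambda)$ computed in $V^{\mathbb{P}_\lambda}$, and the forcing is trivial at every other stage. Here $\mu_\lambda < \kappa$ is chosen in advance to be a singular cardinal of cofinality strictly greater than $\lambda$; for definiteness one may take $\mu_\lambda = \lambda^{+\lambda^{++}}$, which is available since $\kappa$, being $\aleph_{\kappa^{++}}$-strong, is in particular Mahlo and hence closed under such operations. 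Let $G$ be $\mathbb{P}_\kappa$-generic over $V$; the claim is that $V[G]$ witnesses the lemma.

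First I would verify that $\kappa$ remains inaccessible in $V[G]$. Regularity is preserved because $\mathbb{P}_\kappa$ is $\kappa$-c.c., the standard consequence of Easton support at a Mahlo cardinal whose nontrivial stages are cofinally distributed in $\kappa$. The strong-limit property in $V[G]$ follows from $\mu_\lambda < \kappa$ for every $\lambda < \kappa$, which keeps the extension's continuum function bounded in $\kappa$.

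Next I would show that $(2^\lambda)^{V[G]}$ is singular for every infinite $\lambda < \kappa$. For regular $\lambda$ this is routine: factor $\mathbb{P}_\kappa \simeq \mathbb{P}_{\lambda^+} \ast \dot{R}$, observe that $\dot{R}$ is forced to be $\lambda^+$-closed (so the tail adds no new subsets of $\lambda$), and conclude $(2^\lambda)^{V[G]} = (2^\lambda)^{V^{\mathbb{P}_{\lambda^+}}} = \mu_\lambda$, which is singular by construction. For singular $\lambda$ the value of $(2^\lambda)^{V[G]}$ is determined by standard cardinal arithmetic from the values at the regular cardinals below $\lambda$, and the $\mu_{\lambda'}$ for regular $\lambda' < \kappa$ are arranged in advance so that at every singular $\lambda$ the resulting value is again a singular cardinal.

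The main obstacle is precisely this coherent global choice of the $\mu_\lambda$'s, which must force $(2^\lambda)^{V[G]}$ to be singular at every infinite $\lambda < \kappa$ --- including singular $\lambda$ over which the iteration offers no direct control --- while keeping $\mu_\lambda < \kappa$ so that $\kappa$ remains inaccessible. Both demands are individually satisfiable, but carrying them out simultaneously is the heart of the lemma; the fact that $\kappa$ is Mahlo, ultimately from its strength, is what provides enough room below $\kappa$ to manage them.
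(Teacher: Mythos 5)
Your approach cannot work, and the failure is exactly at the point you flag as ``the main obstacle'' and then defer: the singular cardinals. The set $C$ of closure points of the function $\lambda' \mapsto \mu_{\lambda'}$ is a club in $\kappa$ (since each $\mu_{\lambda'}<\kappa$ and $\kappa$ is inaccessible), and $C$ contains singular cardinals, e.g.\ the supremum of its first $\omega$ elements. Fix such a singular closure point $\lambda$. In $V[G]$ it is a strong limit: $2^{<\lambda}=\lambda$, and the continuum function is cofinal below $\lambda$ (as $2^{\lambda'}\geq\lambda'^+$), so it is not eventually constant there. Now compute $2^\lambda$ in $V[G]$ directly. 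The tail of the iteration past stage $\lambda$ is ${<}\lambda^+$-closed (its first nontrivial iterand is $\Add(\lambda^+,\mu_{\lambda^+})$), so every subset of $\lambda$ in $V[G]$ is added by $\mathbb{P}_\lambda$. Under $GCH$ in $V$ one has $|\mathbb{P}_\lambda|\leq\lambda^{\cf\lambda}=\lambda^+$, so the number of nice $\mathbb{P}_\lambda$-names for subsets of $\lambda$ is at most $(2^{\lambda^+})^\lambda=\lambda^{++}$, giving $(2^\lambda)^{V[G]}\in\{\lambda^+,\lambda^{++}\}$ --- a regular cardinal. No choice of the $\mu_{\lambda'}$'s avoids this: any assignment with $\mu_{\lambda'}<\kappa$ produces a club of closure points, hence singular strong limits $\lambda<\kappa$, and at such $\lambda$ the conclusion of the lemma forces $2^\lambda>\lambda^+$, i.e.\ a failure of $SCH$ at $\lambda$. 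An Easton-support iteration of Cohen forcings over a $GCH$ model provably cannot produce that (and, by Gitik--Mitchell, the conclusion has consistency strength at least $o(\kappa)=\kappa^{++}$, which is why the hypothesis is a strongness assumption rather than Mahloness).

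This is why the paper does not use a Cohen iteration at all: it runs the extender-based Radin forcing of Merimovich and Friedman--Golshani, after a preparation adding Cohen subsets at $\kappa^+$, $\kappa^{++}$, $\kappa^{+3}$ that is woven into a new extender sequence system $\bar F$. The Radin generic produces a club $C_H^\kappa$ along which $2^{\lambda^{+n}}=\aleph_{\lambda^{+n+1}}$ for $n\leq 2$ and $2^{\lambda^{+3}}=\aleph_{\lambda_*^+}$ for successive club points $\lambda<\lambda_*$; all these values are singular, and the extenders are what allow $2^\lambda>\lambda^+$ at the singular strong limit points of the club. The $\aleph_{\kappa^{++}}$-strongness is used essentially there, not merely to supply room below $\kappa$.
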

\begin{proof}
We use the extender based Radin forcing as developed in \cite{merimovich}, and continued in \cite{friedman-golshani}. Our presentation follows \cite{friedman-golshani}. We assume the reader is familiar with these papers and use the definitions and results from them without any mention.

Let $V^*$ denote the ground model. Let $j\colon V^* \to M^*$ be an elementary embedding witnessing the $\aleph_{\k^{++}}$-strongness of $\k$ and let $\bar{E}\in V^*$ be an extender sequence system derived from $j$, $\bar{E}=\langle \bar{E}_\a: \a\in \dom(\bar{E}) \rangle$, where $\dom(\bar{E})=[\k, \aleph_{\k^{+}})$ and $\len(\bar{E})=\k^+.$ Then the ultrapower $j_{\bar{E}}\colon V^* \to M^*_{\bar{E}}\simeq Ult(V^*,\bar{E})$ has critical point $\k$ and $M^*_{\bar{E}}$ contains $V^*_{\aleph_{\k^{+}}}$. Consider the following elementary embeddings $\forall \tau' < \tau < \len(\bar{E})$
\begin{align*} \label{E-system}
& j_\gt\func  \VS \to \MSt \simeq \Ult(\VS, E(\gt))=
\{j_\tau(f)(\bar E_\alpha \restricted \tau)\mid f\in V^*, \alpha\in [\kappa, \aleph_{\kappa^+})\},
\notag \\
&  k_\gt(j_\gt(f)(\Es_\ga \restricted \gt))=
        j(f)(\Es_\ga \restricted \gt),
\\
\notag & i_{\gt', \gt}(j_{\gt'}(f)(\Es_\ga \restricted \gt')) =
    j_\gt(f)(\Es_\ga \restricted \gt'),
\\
\notag & \ordered{\MSE,i_{\gt, \Es}} = \limdir \ordered {
        \ordof{\MSt} {\gt < \len(\Es)},
                \ordof{i_{\gt',\gt}} {\gt' \leq \gt < \len(\Es)}
        }.
\end{align*}

We demand that
        $\Es \restricted \gt \in \MSt$ for all $\tau<\len(\bar E)$.

Thus we get the following commutative diagram.

\[
\xymatrix{
\VS \ar[rrrrr]^j \ar[rrrrd]^{j_{\Es}} \ar[rrrdd]_{j_\gt} \ar[rdd]_{j_{\gt'}} & & & & & \MS \\
& & & & \MSE\ar[ur]^{k_\Es} & \\
& M^*_{\gt'}\ar@/_6pc/[uurrrr]_{k_{\gt'}} \ar[urrr]^{i_{\gt', \Es}} \ar[rr]_{i_{\gt', \gt}} &  & \MSt = \Ult(\VS, E(\gt)) \ar[ru]_{i_{\gt, \Es}}\ar@/_5pc/[rruu]_{k_\gt} & & \\
}
\]

Also factor through the normal ultrafilter to get the following commutative diagram

\begin{align*}
\begin{aligned}
\begin{diagram}
\node{\VS}
        \arrow{e,t}{j_\Es}
        \arrow{se,t}{j_\gt}
        \arrow{s,l}{i_U}
        \node{\MSE}
\\
\node{\NS \simeq \Ult(\VS, U)}
         \arrow{e,b}{i_{U, \gt}}
         \arrow{ne,b}{i_{U, \Es}}
        \node{\MSt}
         \arrow{n,b}{i_{\gt, \Es}}
\end{diagram}
\end{aligned}
\begin{aligned}
\qquad
\begin{split}
& U = E_\gk(0),
\\
& i_U \func  \VS \to \NS \simeq \Ult(\VS, U),
\\
& i_{U, \gt}(i_U(f)(\gk)) = j_\gt(f)(\gk),
\\
& i_{U, \Es}(i_U(f)(\gk)) = j_\Es(f)(\gk).
\end{split}
\end{aligned}
\end{align*}

Force with
\begin{center}
$\mathbb{R}=\Add(\k^+, (\aleph_{\k^{++}})^{M^*_{\bar{E}}})\times \Add(\k^{++}, (\aleph_{\k^{+3}})^{M^*_{\bar{E}}})\times \Add(\k^{+3}, (\aleph_{j_{\bar{E}}(\k)^+})^{M^*_{\bar{E}}}).$
\end{center}
Let $G$ be $\mathbb{R}-$generic over $V$.
By essentially the same arguments as those given in \cite{friedman-golshani}, \cite{merimovich},
we can find filters $G_{\bar{E}}, G_U$ and $G_\tau, \tau<\len(\bar{E}),$ such that $G_{\bar{E}}$
is $j_{\bar{E}}(\PP)$-generic over $M^*_{\bar{E}},$ $G_U$ is $i_U(\PP)$-generic
over $N^*$ and $G_\tau$ is $j_\tau(\PP)$-generic over $M^*_\tau$
and such that
 the following  diagram is commutative:

 \begin{align*}
\begin{diagram}
\node{V = \VS[G]}
        \arrow[2]{e,t}{j_\Es}
        \arrow{s,l}{i_{U}}
        \arrow{se,b}{j_{\gt'}}
        \arrow{see,b}{j_\gt}
    \node{}
        \node{\ME = \MSE[G_\Es]}
\\
    \node{N = \NS[G_U]}
         \arrow{e,b}{i_{U, \gt'}}
    \node{M_{\gt'} = M^*_{\gt'}[G_{\gt'}]}
         \arrow{ne,t,3}{i_{\gt', \Es}}
         \arrow{e,b}{i_{\gt', \gt}}
    \node{\Mt = \MSt[G_\gt]}
        \arrow[1]{n,b}{i_{\gt, \Es}}
\end{diagram}
\end{align*}
Set
\begin{center}
$\mathbb{R}_U=\Big(\Add(\k^+, \aleph_{\k^{++}})\times \Add(\k^{++}, \aleph_{\k^{+3}})\times \Add(\k^{+3}, \aleph_{i_{U}(\k)^+})\Big)^{N^*}.$

$\mathbb{R}_\tau=\Big(\Add(\k^+, \aleph_{\k^{++}})\times \Add(\k^{++}, \aleph_{\k^{+3}})\times \Add(\k^{+3}, \aleph_{j_{\tau}(\k)^+})\Big)^{M_\tau}.$

$\mathbb{R}_{\bar{E}}=\Big(\Add(\k^+, \aleph_{\k^{++}})\times \Add(\k^{++}, \aleph_{\k^{+3}})\times \Add(\k^{+3}, \aleph_{j_{\bar{E}}(\k)^+})\Big)^{M^*_{\bar{E}}}.$
\end{center}

Then we can find  $I_{U}, I_{\tau}$ and $I_{\bar{E}}$ in $V=V^*[G]$ such that:
\begin{enumerate}
\item $I_{U}$ is $\mathbb{R}_{U}$-generic over $N^{*}[G_{U}]$,

\item $I_{\tau}$ is $\mathbb{R}_{\tau}$-generic over $M_{\tau}^{*}[G_{\tau}]$,

\item $I_{\bar{E}}$ is $\mathbb{R}_{\bar{E}}$-generic over $M_{\bar{E}}^{*}[G_{\bar{E}}]$,

\item The generics are so that we have the following lifting diagram

\begin{align*}
\begin{diagram}
\node{}
    \node{}
        \node{\ME[I_\Es]}
\\
    \node{N[I_U]}
         \arrow{e,b}{i^*_{U, \gt'}}
    \node{M_{\gt'}[I_{\gt'}]}
         \arrow{ne,t,3}{i^*_{\gt', \Es}}
         \arrow{e,b}{i^*_{\gt', \gt}}
    \node{\Mt[I_\gt]}
        \arrow[1]{n,b}{i^*_{\gt, \Es}}
\end{diagram}
\end{align*}
\end{enumerate}

Iterate $j_\Es$ and consider the following diagram:
\begin{align*}
\begin{diagram}
\node{V}
        \arrow[2]{e,t}{j_\Es = j^{0,1}_\Es}
        \arrow{se,t,1}{j_{\gt_1}}
        \arrow{s,l}{i_{U}}
    \node{}
        \node{\ME}
        \arrow[2]{e,t}{j^{1,2}_\Es}
        \arrow{se,t,1}{j^2_{\gt_2}}
        \arrow{s,l}{i^2_{U}}
    \node{}
        \node{M_\Es^2}
        \arrow[2]{e,t}{j^{2,3}_\Es}
        \arrow{se,t,1}{j^3_{\gt_3}}
        \arrow{s,l}{i^3_{U}}
    \node{}
        \node{M_\Es^3}
        \arrow[1]{e,..}
\\
\node{N}
        \arrow[1]{e,b}{i_{U, \gt_1}}
        \arrow[1]{nee,t,3}{i_{U, \Es}}
    \node{M_{\gt_1}}
        \arrow[1]{ne,b,1}{i_{\gt_1, \Es}}
    \node{N^2}
        \arrow[1]{e,b}{i^2_{U, \gt_2}}
        \arrow[1]{nee,t,3}{i^2_{U, \Es}}
    \node{M^{2}_{\gt_2}}
        \arrow[1]{ne,b,1}{i^2_{\gt_2, \Es}}
    \node{N^3}
        \arrow[1]{e,b}{i^3_{U, \gt_3}}
        \arrow[1]{nee,t,3}{i^3_{U, \Es}}
    \node{M^{3}_{\gt_3}}
        \arrow[1]{ne,b,1}{i^3_{\gt_3, \Es}}
\end{diagram}
\end{align*}
where
\begin{align*}
& j^0_\Es = \id,
\\
& j^{n}_\Es = j^{0, n}_\Es,
\\
& j^{m, n}_\Es = j^{n-1, n}_\Es \circ \dotsb \circ j^{m+1, m+2}_\Es \circ j^{m ,m+1}_\Es.
\end{align*}

Let $R(-,-)$ be a function such that
\begin{center}
$i_{U}^{2}(R)(\kappa, i_{U}(\kappa))=\mathbb{R}_U,$

\end{center}
where $i_{U}^{2}$ is the second iterate of $i_{U}.$ It is also clear that
\begin{center}
 $j_{\bar{E}}^{2}(R)(\kappa, j_{\bar{E}}(\kappa))=\mathbb{R}_{\bar{E}}.$
\end{center}

As in \cite{merimovich} (see also \cite{friedman-golshani}), in the prepared model $V=V^*[G]$,  we define a new extender sequence system $\bar{F}= \langle \bar{F}_{\alpha}: \alpha \in \dom(\bar{F})\rangle$ by:
\begin{itemize}
  \item $\dom(\bar{F})=\dom(\bar{E}),$ \item $\len(\bar{F})=\len(\bar{E})$ \item $\leq_{\bar{F}}=\leq_{\bar{E}},$ \item $F(0)=E(0),$ \item $I(\tau)=I_{\tau},$ \item $\forall 0< \tau < \len(\bar{F}), F(\tau)= \langle \langle F_{\alpha}(\tau): \alpha \in \dom(\bar{F}) \rangle, \langle \pi_{\beta, \alpha}: \beta, \alpha \in \dom(\bar{F}), \beta \geq_{\bar{F}} \alpha \rangle \rangle$  is such that
    \begin{center}
    $X \in F_{\alpha}(\tau) \Leftrightarrow \langle \alpha, F(0), I(0), ..., F(\tau^{'}), I(\tau^{'}), ...: \tau^{'}  < \tau \rangle \in j_{\bar{E}}(X),$
    \end{center}
and
\begin{center}
$\pi_{\beta, \alpha}(\langle \xi, d \rangle)= \langle \pi_{\beta, \alpha}(\xi), d \rangle, $
\end{center}
\item $\forall \alpha \in \dom(\bar{F}), \bar{F}_{\alpha}= \langle \alpha, F(\tau),I(\tau): \tau < \len(\bar{F})  \rangle.$
\end{itemize}
Also let $I(\bar{F})$ be the filter generated by $\bigcup_{\tau < \len(\bar{F})} i_{\tau, \bar{E}}^{''}I(\tau).$ Then $I(\bar{F})$ is $\mathbb{R}_{\bar{F}}$-generic over $M_{\bar{F}}.$


Working in $V$, let $\mathbb{P}_{\bar{F}}$ be the corresponding extender based Radin forcing, as defined in \cite{merimovich} (see also \cite[Definition 5.1]{friedman-golshani}). Note that the definition of $\mathbb{P}_{\bar{F}}$ depends on the function $R$, and this is were the iterability of the extender sequence plays a role. Let $H$ be $\mathbb{P}_{\bar{F}}$-generic over $V$. By reflection, we may assume that each $\bar{\mu}$ which appears in some condition in $\mathbb{P}_{\bar{F}}$ has $\dom(\bar{\mu})=[\k^0(\bar{\mu}), \aleph_{\k^0(\bar{\mu})^{+}}).$ For $\alpha \in \dom(\bar{F})$ set
\begin{center}
$C_{H}^{\alpha} = \{ \max\kappa(p_{0}^{\bar{F}_{\alpha}}): p \in H \}.$
\end{center}

The following clauses can be proved as in \cite{friedman-golshani}, \cite{merimovich}.
\begin{enumerate}
\item $V[H]$ and $V$ have the same cardinals.

\item $\kappa$ remains strongly inaccessible in $V[H]$.

\item $C_{H}^{\kappa}$ is a club in $\kappa.$

\item Let $\k_0=\min(C_{H}^{\kappa}),$ and let $K$ be $\Add(\omega, \aleph_{\k_0^{+}})_{V[H]}$-generic over $V[H].$ Then the following hold in $V[H][K]:$
\begin{itemize}
\item $\forall \l <\k_0, 2^\l=\aleph_{\k_0^+}$.
\item If $\l < \l_*$ are two successive points in $C_{H}^{\kappa}$, then $2^{\l^{+n}}=\aleph_{\l^{+n+1}},$ for $n=0,1,2$ and $2^{\l^{+3}}=\aleph_{\l_*^+}$.
\end{itemize}

\end{enumerate}
In particular $V[H][K] \models $``$\k$ is strongly inaccessible and for all $\l<\k,$ $2^\l$ is a singular cardinal''. The lemma follows.
\end{proof}
We are now ready to complete the proof of Theorem 3.1.

Let $V$ be a model of $GCH+\k$ is $\aleph_{\kappa^{++}}$ strong. By Lemma~\ref{lem: singular continuum everywhere}, there exists a cardinal preserving generic extension $V$ of $V^*$ in which $\k$ remains inaccessible, and for all infinite cardinals $\l<\k, 2^\l$ is a singular cardinal. Let $\k_*\leq \k$ be the least inaccessible cardinal of $V$, and consider the model $V_{\k_*}.$ It is a model of $ZFC$. We show that $V_{\k_*}$ is as required. So let $\l<\k_*$ be an uncountable cardinal, and let $\mathbb{P}\in V_{\k_*}$ be a non-trivial $\l$-closed forcing notion of size $2^{<\l}.$ As for a singular cardinal $\l,$ being $\l$-closed implies $\l^+$-closed, we can assume without loss of generality that $\l$ is a regular cardinal, and hence by our choice of $\k_*, \l=\mu^+,$ for some cardinal $\mu.$ So it follows from Lemma~\ref{lem: singular continuum collapsing} that forcing with $\mathbb{P}$ collapses $2^\mu$. The theorem follows.
\end{proof}
\section{consistency strength of statement $(2)$ for forcing notions of arbitrary size}
In this section we discuss the consistency of statement $(2)$ for forcing notions of arbitrary large size. We show that in this case the problem is much more difficult, and it requires some very large cardinals. For the sake of simplicity, let's just consider the case $\kappa=\aleph_0.$

The following shows that non-triviality of the forcing is essential. Suppose $\lambda>\aleph_1$ is regular and let $\mathbb{P}=\lambda$, as set of ordinals. Order $\mathbb{P}$ with the reversed ordinal order separated at $\omega_1$, i.e. $p \leq_\mathbb{P} q$ iff $p \geq q$ as ordinals and $p,q < \omega_1$ or $\omega_1 \leq p,q$.
It is clear that $\mathbb{P}$ is $\aleph_1$-closed (but not $\aleph_2$-closed), and it preserves all cardinals.

\begin{theorem}
Let $\l$ be a regular cardinal such that $2^{<\l} = \l$. For every regular cardinal $\mu < \l$ there is a separative forcing notion $\mathbb{P}$ which is $\mu$-closed, not $\mu^+$-closed and $\l$-distributive. Moreover, $|\mathbb{P}| = \l$, so $\mathbb{P}$ does not collapse cardinals.
\end{theorem}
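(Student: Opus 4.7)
My plan is to construct $\mathbb{P}$ as a subtree $T \subseteq {}^{<\lambda}2$ ordered by reverse extension, built by transfinite recursion of length $\lambda$, leveraging the hypothesis $2^{<\lambda}=\lambda$ (which yields $\lambda^{<\lambda}=\lambda$) for bookkeeping. The high-level point is that $\lambda$-distributivity and $\mu^{+}$-closure are formally independent: a $\lambda$-distributive Boolean completion can have $\mu$-sequences of non-zero elements whose meet is $0$, and a carefully designed separative $\mathbb{P}$ can realize this.

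First I would use $\lambda^{<\lambda}=\lambda$ to fix an enumeration $\langle(\dot f_\xi,p_\xi):\xi<\lambda\rangle$ listing all pairs where $\dot f_\xi$ is a nice $T$-name for a $<\lambda$-sequence of ordinals and $p_\xi$ is a node already available at some stage of the construction. Then I would build $T$ level by level. At successor levels $\alpha+1$, every surviving node at level $\alpha$ is given both a $0$- and a $1$-extension. At limit levels $\alpha$ with $\cf\alpha\neq\mu$, the natural limit of every surviving chain is included; this is what will yield $\mu$-closure, since a chain of length $<\mu$ has supremum of cofinality $<\mu$. At limit levels $\alpha$ with $\cf\alpha=\mu$, limits are included selectively: at least one chain of length $\mu$ has its limit omitted (this witnesses failure of $\mu^{+}$-closure), and the remaining omissions are arranged so that every pair $(\dot f_\xi,p_\xi)$ with $\xi<\alpha$ is ``handled''---every surviving node below $p_\xi$ extends to a node at level $\alpha$ deciding $\dot f_\xi$.

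The straightforward verifications are $|T|\le\lambda$ (each level has size at most $2^{<\lambda}=\lambda$ and there are $\lambda$ levels), $\mu$-closure (from the inclusion rule at low-cofinality limits), and failure of $\mu^{+}$-closure (from the explicitly killed chain at the first cofinality-$\mu$ level: any lower bound of that chain would have to sit at a node we have deliberately excluded).

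The main obstacle is verifying $\lambda$-distributivity. Given a name $\dot f$ for a $<\lambda$-sequence of ordinals, it will appear at some stage $\xi<\lambda$ of the enumeration; at the next cofinality-$\mu$ level above $\xi$, the construction arranges that the set of conditions deciding $\dot f$ is dense, so $\Vdash_{\mathbb{P}}\dot f\in V$. The delicate point is confirming that the pattern of omissions at cofinality-$\mu$ levels, while killing chains, does not itself encode a new $<\lambda$-sequence---this is why the enumeration and the omission rule must be fixed in $V$ in advance, so that the only ``new'' information added by $\mathbb{P}$ is the generic branch of length $\lambda$, whose initial segments are nodes of $T$ and hence lie in $V$.
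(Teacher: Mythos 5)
Your tree-building approach has a genuine gap at exactly the point you flag as delicate: the $\lambda$-distributivity argument does not go through as described. First, the bookkeeping is not available: a nice $T$-name for a $<\lambda$-sequence of ordinals is built from maximal antichains of $T$, and a forcing of size $\lambda$ has $2^{\lambda}$ many antichains and $2^{\lambda}$ many dense open subsets, so the relevant objects cannot be enumerated in order type $\lambda$; the hypothesis $2^{<\lambda}=\lambda$ controls the number of nodes, not the number of names. Second, even granting some $\lambda$-sized family, a $T$-name refers to the completed tree, which is not available at stage $\xi<\lambda$ of the recursion, so the instruction ``at the next cofinality-$\mu$ level above $\xi$ arrange that the conditions deciding $\dot f_\xi$ are dense'' is circular: whether a node decides $\dot f_\xi$ depends on the part of $T$ you have not yet built. (In Souslin-tree constructions this circularity is broken by sealing maximal antichains of $T\restriction\alpha$ and by $\diamondsuit$-style guessing; nothing analogous is supplied here.) Third, and most importantly, the real obstruction to distributivity in your tree is structural rather than name-theoretic: to meet $\eta<\lambda$ many dense open sets below $p$ one wants to run a decreasing sequence of length $\eta$, taking unions at limits, and those unions may fail to lie in $T$ precisely at the cofinality-$\mu$ levels where you omit limits. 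You need a rule guaranteeing that such sequences can always be steered onto surviving nodes --- that is, you need $T$ to be $\lambda$-strategically closed --- and your omission rule is not shown to have this property; indeed it is in tension with it, since you also want enough omissions to destroy $\mu^{+}$-closure below every condition.

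The paper avoids all of this with a concrete forcing: conditions are bounded functions $p\colon S^{\lambda}_{<\mu}\cap\alpha\to 2$ whose $1$-part is non-stationary at every $\beta\le\alpha$ of cofinality $\ge\mu$, ordered by end-extension (adding a non-reflecting stationary subset of $S^{\lambda}_{<\mu}$). There $\mu$-closure and the failure of $\mu^{+}$-closure are read off directly, and $\lambda$-distributivity follows from an explicit winning strategy for the good player (pad with zeros; at limits the padding points form a club witnessing non-stationarity of the $1$-part), with no enumeration of names at all. If you want to salvage a tree construction, you should aim to build in $\lambda$-strategic closure directly rather than trying to decide names by bookkeeping.
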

\begin{proof}
The forcing $\mathbb{P}$ will be the forcing that adds a non-reflecting stationary subset of $S^{\l}_{<\mu} = \{\alpha < \l \mid \cf\alpha < \mu\}$, by  bounded conditions. Let us describe $\mathbb{P}$ precisely:

The conditions in $\mathbb{P}$ are functions $p \colon S^{\l}_{<\mu} \cap \alpha \to 2$ such that $\alpha < \l$ and $p^{-1}(1)$ is non-stationary at every $\beta \leq \alpha$ such that $\cf\beta\geq\mu$.

The order of $\mathbb{P}$ is end extension. Let us show that $\mathbb{P}$ has the required properties.

First, since $2^{<\l} = \l$, there are exactly $\l$ bounded subsets of $\l$. In particular, $|\mathbb{P}| \leq \l$. On the other hand, for every $\alpha \in S^{\l}_{<\mu}$, the function $p\colon S^{\l}_\mu \cap \alpha  \to 2$ defined by $p(\b) = 0$ for every $\b$ is a condition in $\mathbb{P}$, so $|\mathbb{P}| = \l$.

Next, $\mathbb{P}$ is $\mu$-closed since if $\langle p_\alpha \mid \alpha < \eta\rangle$, $\eta < \mu$, is a decreasing sequence of conditions, then $p = \bigcup_{\alpha<\eta} p_\alpha$ is a condition in $\mathbb{P}$. The only thing that we need to verify is that $p^{-1}(1)$ is non-stationary, or that $\cf(\dom(p)) < \mu$, and clearly if the sequence is not eventually constant the second option occurs.

Let us show that $\mathbb{P}$ is not $\mu^+$-closed. For each $\alpha< \mu$ let $p_\alpha \in \mathbb{P}$ be the condition with $\dom(p_\alpha) = \alpha$ and $p_\alpha (\b) = 1$ for every $\b$. Then the sequence of all $p_\alpha$, $\alpha < \mu$ has no lower bound, since for any condition $q$ such that $q\leq p_\alpha$ for every $\alpha$ we would have $\alpha \subseteq q^{-1}(1)$ for every $\alpha < \mu$ and therefore $\mu \subseteq q^{-1} (1)$, so $q^{-1}(1)$ is stationary at $\mu$. Similar argument shows that  we can construct such sequences below any condition $p \in \mathbb{P}$, so $\mathbb{P}$ is nowhere $\mu^+$-closed.

The main property of $\mathbb{P}$ is its $\l$-distributiveness. Let us show that $\mathbb{P}$ is $\l$-strategically closed, and in particular $\l$-distributive.

Let $\langle p_\alpha \mid \alpha < \eta\rangle$ be the play until step $\eta$. If $\eta$ is a successor ordinal, the strategy of the good player is to pick some ordinal in $S^{\l}_{<\mu}$ above the supremum of the domain of $p_{\eta - 1}$ and extend $p_{\eta - 1}$ up to this ordinal by appending zeros. If $\eta$ is limit less than $\l$ and $\bigcup_{\alpha < \eta} p_\alpha$ is a condition (so the good player did not lose already), the strategy will be to set $p_\eta = \bigcup_{\alpha < \eta} p_\alpha \cup \{\langle\eta,0  \rangle\}$, if $\eta \in S^{\l}_{<\mu}$, and otherwise just $p_\eta = \bigcup_{\alpha < \eta} p_\alpha$.

Let us prove that this is indeed a winning strategy, namely that at every limit stage of the play below $\l$, the union of the conditions until this step is a condition. Let $\eta$ be a limit ordinal of cofinality at least $\mu$. Then $\{\supp(p_\alpha) : \alpha < \eta,\,\text{limit}\}$ is a club at $\sup_{\alpha < \eta}\supp (p_\alpha)$ that witnesses that $\bigcup p_\alpha ^{-1} (1)$ is non-stationary, as wanted.

Since $\mathbb{P}$ is $\l$-distributive, it doesn't collapse cardinals $\leq \l$. Moreover, since $|\mathbb{P}|=\l$, $\mathbb{P}$ is $\l^{+}$-$c.c.$ and therefore it does not collapse cardinals above $\l$, so $\mathbb{P}$ does not collapse cardinals at all.
\end{proof}
\begin{corollary}
Assume there is an uncountable regular cardinal $\l$ such that $2^{<\l}=\l.$ Then there is a non-trivial $\aleph_1$-closed (but not $\aleph_2$-closed) forcing notion which preserves all cardinals.
\end{corollary}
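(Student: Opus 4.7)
The plan is to apply the preceding theorem with $\mu = \aleph_1$. Take $\l$ as given by the hypothesis; we may assume $\l > \aleph_1$ so that $\mu = \aleph_1 < \l$ is a legitimate choice in the theorem. The theorem then produces a separative forcing notion $\mathbb{P}$ of cardinality $\l$ that is $\aleph_1$-closed, not $\aleph_2$-closed, and $\l$-strategically closed, hence in particular $\l$-distributive.

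Cardinal preservation then splits into two easy checks. Since $\mathbb{P}$ is $\l$-distributive, it adds no new sequence of ordinals of length less than $\l$, so every cardinal $\leq \l$ of the ground model remains a cardinal in the extension. Since $|\mathbb{P}| = \l$, a standard nice-name argument yields the $\l^+$-chain condition, preserving all cardinals strictly above $\l$. Together these show that $\mathbb{P}$ collapses no cardinals at all.

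Non-triviality is immediate because the generic filter codes a fresh non-reflecting stationary subset of $S^{\l}_{<\aleph_1}$, which cannot lie in $V$ (any stationary set in $V$ concentrated on $S^{\l}_{<\aleph_1}$ reflects at some point of cofinality $\geq\aleph_1$, whereas the generic one does not). I expect no substantive obstacle: the heavy lifting is entirely done in the theorem, and the only point requiring care is to verify that $\mu = \aleph_1 < \l$ so that the theorem is actually applicable. The corollary is best viewed as a repackaging of the theorem for the canonical pair $(\mu, \mu^+) = (\aleph_1, \aleph_2)$, highlighting that even in ZFC $+$ mild cardinal arithmetic, statement (2) of the introduction fails once one drops the size restriction on $\mathbb{P}$.
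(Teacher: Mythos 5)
Your overall strategy --- apply the preceding theorem with $\mu=\aleph_1$ --- is exactly how the paper intends this corollary to be read (it is stated without a separate proof), and your cardinal--preservation argument ($\l$-distributivity below $\l$, the $\l^+$-c.c.\ coming from $|\mathbb{P}|=\l$ above $\l$) is the one already carried out inside the theorem. However, the step ``we may assume $\l>\aleph_1$'' is a genuine gap: the hypothesis only provides \emph{some} uncountable regular $\l$ with $2^{<\l}=\l$, and if the only witness is $\l=\aleph_1$ (i.e.\ CH holds but $2^{\mu}>\mu^+$ for every uncountable $\mu$ and no weakly inaccessible $\nu$ satisfies $2^{<\nu}=\nu$ --- a configuration you cannot rule out in ZFC), then the theorem with $\mu=\aleph_1$ is simply inapplicable, since it requires $\mu<\l$. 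The case needs to be handled separately; the natural patch is to note that $\l=\aleph_1$ means CH, under which $\Add(\aleph_1,1)$ is $\aleph_1$-closed, nowhere $\aleph_2$-closed, of size $\aleph_1^{\aleph_0}=\aleph_1$ and hence $\aleph_2$-c.c., so it preserves all cardinals.

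A second, smaller problem is your justification of non-triviality. The parenthetical claim that every stationary subset of $S^{\l}_{<\aleph_1}$ lying in $V$ reflects at a point of cofinality $\geq\aleph_1$ is false in ZFC: under $V=L$, for instance, every regular $\l>\aleph_1$ carries a non-reflecting stationary subset of $S^{\l}_{<\aleph_1}$. So non-reflection of the generic set does not show it is new. Fortunately no such argument is needed: the theorem already produces a separative forcing which is nowhere $\mu^+$-closed, and the construction witnessing this puts two incompatible extensions below every condition, so $\mathbb{P}$ is non-atomic and the generic filter cannot belong to $V$. With these two repairs your proof is complete and coincides with the paper's intended argument.
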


\begin{corollary}
Suppose there is no inner model with a measurable cardinal $\kappa$ of Mitchell order $\kappa^{++}.$ Then there is a non-trivial $\aleph_1$-closed but not $\aleph_2$-closed forcing notion which preserves all cardinals.
\end{corollary}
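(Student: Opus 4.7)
The plan is to reduce this corollary to the previous one by showing that the hypothesis implies the existence of an uncountable regular cardinal $\lambda$ with $2^{<\lambda}=\lambda$. The reduction will use the Gitik--Mitchell theorem, which states that if SCH (the Singular Cardinals Hypothesis) fails at some singular cardinal, then there is an inner model with a measurable cardinal $\kappa$ of Mitchell order $\kappa^{++}$. Contrapositively, under our hypothesis SCH must hold.

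Assuming SCH, I would exhibit the desired $\lambda$ as the successor of a strong limit singular cardinal. The canonical candidate is $\beth_\omega$: for any $\mu<\beth_\omega$ one has $\mu\leq\beth_n$ for some $n<\omega$, so $2^\mu\leq\beth_{n+1}<\beth_\omega$, showing that $\beth_\omega$ is a strong limit singular cardinal of cofinality $\omega$. A standard consequence of SCH is that $2^\nu=\nu^+$ for every strong limit singular $\nu$, and in particular $2^{\beth_\omega}=\beth_{\omega+1}$. Setting $\lambda:=\beth_{\omega+1}$, a regular uncountable cardinal, one obtains
\[
2^{<\lambda}=2^{\beth_\omega}=\beth_{\omega+1}=\lambda,
\]
as required.

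With $\lambda$ in hand, the previous corollary applies directly and yields a non-trivial $\aleph_1$-closed, not $\aleph_2$-closed forcing notion which preserves all cardinals. The substantive content is entirely carried by the Gitik--Mitchell theorem; once it is invoked as a black box, the remaining argument is a short arithmetical verification with $\beth_\omega$, so no genuine obstacle arises.
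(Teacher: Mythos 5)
Your proposal is correct and follows essentially the same route as the paper: invoke the Gitik--Mitchell covering results to obtain a strong limit singular cardinal $\nu$ with $2^\nu=\nu^+$, set $\lambda=\nu^+$ (so $2^{<\lambda}=\lambda$), and apply the preceding theorem/corollary with $\mu=\aleph_1$. The only cosmetic difference is that you derive full SCH and pick the explicit witness $\beth_\omega$, whereas the paper merely asserts the existence of some such singular cardinal.
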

\begin{proof}
By results of Gitik and Mitchell \cite{mitchell}, there is a strong limit singular cardinal $\kappa$ such that $2^\kappa=\kappa^+$. Applying the previous lemma with $\mu=\aleph_1$ we get the desired result.
\end{proof}
It follows that if we want a model in which all non-trivial $\aleph_1$-closed forcing notions collapse some cardinals, then $GCH$ should fail everywhere in that model.

Let us close this section by showing that there exists a non-trivial $\aleph_1$-closed cardinal preserving forcing notion in the model $V[H][K]$ of section 3. We assume that $V^*$ (and hence $V=V^*[G]$) has no inner model with a Woodin cardinal. It then follows that the combinatoric principle $\Box_\mu$ holds for every singular $\mu$ (see \cite{schimmerling-zeman}).

Let $\mu=\aleph_{\k_0^+}$ and $\l=\mu^+.$ Note that $V[H][K]=V[K][H],$ and that
\begin{center}
$V[K]\models$``$2^{\aleph_0}=\mu + \Box_\mu + SCH$ holds at $\mu$''.
\end{center}
It follows from \cite{rinot} that there exists  an $\aleph_1$-closed $\l$-Souslin tree $T$ in $V[K]$. We show that $T$ remains an $\aleph_1$-closed $\l$-Souslin tree in $V[K][H].$

To do this, we need a finer analysis of the model $V[K][H].$ For inaccessible cardinals $\a<\b$ set
\begin{center}
$\mathbb{P}(\a,\b)=(\Add(\a^+, \aleph_{\a^{++}})\times \Add(\a^{++}, \aleph_{\a^{+3}})\times Add(\a^{+3}, \aleph_{\b^+}))^V.$
\end{center}
Then it is easily seen that
\begin{center}
$V[K][H]=V[K\times H_1][H_2]=V[K][H_1][H_2],$
\end{center}
where
\begin{enumerate}
\item $K\times H_1$ is $\Add(\aleph_0, \mu)\times\mathbb{P}(\k_0, \k_1)$-generic over $V,$ where $\k_0 < \k_1$ are the first two elements of the club $C_H^\k.$
\item $V_{\k_1}^{V[K][H]}=V_{\k_1}^{V[K][H_1]}.$
\end{enumerate}
Now $V\models$``$\Add(\aleph_0, \mu)\times\mathbb{P}(\k_0, \k_1)$ is $\k_0^{+4}$-Knaster'', hence  $V[K]\models$``$\mathbb{P}(\k_0, \k_1)$ is $\k_0^{+4}$-Knaster, and $\k_0^+$-distributive''.
So after forcing with $\mathbb{P}(\k_0, \k_1)$ over $V[K]$, $T$ remains an $\aleph_1$-closed $\l$-Souslin tree. So by clause $(2)$ above
\begin{center}
$V[K][H]\models$`` $T$ is an $\aleph_1$-closed $\l$-Souslin tree''.
\end{center}
Thus $T$, considered as a forcing notion, is a non-trivial $\aleph_1$-closed forcing notion of size $\l$ which preserves all cardinals.

\section{Adding club sets in the absence of $GCH$}
In this section, we continue our study of the last section, and present a different method for producing cardinal preserving forcing notions in the absence of instances of $GCH$. Our forcing notions will be of size of a regular cardinal $\l$ such that $\l^{<\l} = \l$. Given any regular cardinal $\kappa$ with $\kappa^+ < \lambda,$ note that there is a natural forcing (i.e., $\Add(\l,1)$) which is $\kappa^+$-closed and $\l$-distributive. However the forcing $\Add(\l, 1)$
is also $\kappa^{++}$-closed and it preserves stationary subsets of $\lambda.$ The next theorem shows that it is consistent that there are
 $\kappa^+$-closed but not $\kappa^{++}$-closed forcing notions of size $\lambda$ which are $\l$-distributive  and   destroy a stationary subset of $\lambda$.

The results of this section are joint work with Moti Gitik, and are presented here with his kind permission.

\begin{theorem}  Assume $GCH$. Let $\kappa$ and $\lambda>\kappa^+$ be  regular cardinals. Then there is a cofinality preserving generic extension in which the following hold:  $2^\kappa=\lambda$ and there exists a non-trivial $\kappa^+$-closed, but not $\kappa^{++}$-closed, forcing notion of size $\lambda$ which preserves all cardinals (in fact the forcing is $\lambda$-distributive, $\lambda^+$-$c.c.$).
\end{theorem}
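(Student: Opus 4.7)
The plan is to deduce the theorem from Theorem 4.2 by first passing to an intermediate cardinal-preserving extension where $2^\kappa = \lambda$ and, crucially, $2^{<\lambda}=\lambda$; then applying Theorem 4.2 inside that extension, with its parameter $\mu$ instantiated as $\kappa^+$, produces the required $\mathbb{P}$.

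For the preliminary step, I would force over $V$ with $\mathbb{Q}=\Add(\kappa,\lambda)$. Under GCH, $\mathbb{Q}$ is $\kappa$-closed and (by a standard $\Delta$-system argument) $\kappa^+$-c.c., so the extension $V[G]$ has the same cardinals and cofinalities as $V$, and plainly $V[G]\models 2^\kappa=\lambda$. The cardinal-arithmetic check worth spelling out is that $V[G]\models 2^{<\lambda}=\lambda$: for $\mu<\kappa$, $2^\mu$ is unchanged by $\kappa$-closure; for $\kappa\le\mu<\lambda$, a nice-name count gives $(2^\mu)^{V[G]} \le (|\mathbb{Q}|^\mu)^V = \lambda^\mu = \lambda$ (using GCH in $V$ together with the regularity of $\lambda$), while $(2^\mu)^{V[G]} \ge (2^\kappa)^{V[G]} = \lambda$ is immediate.

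Now working inside $V[G]$, the hypotheses of Theorem 4.2 hold with $\lambda$ unchanged and $\mu$ replaced by $\kappa^+$, which is regular and strictly less than $\lambda$. The theorem therefore yields a separative non-trivial forcing notion $\mathbb{P}\in V[G]$ of size $\lambda$ that is $\kappa^+$-closed, not $\kappa^{++}$-closed, and $\lambda$-strategically closed, hence $\lambda$-distributive. The $\lambda$-distributivity handles cardinal preservation at and below $\lambda$, and $|\mathbb{P}|=\lambda$ supplies the $\lambda^+$-c.c., preserving cardinals above $\lambda$. Thus $V[G]$ is the desired cofinality-preserving extension and $\mathbb{P}\in V[G]$ is the required forcing notion.

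There is no serious obstacle: the nontrivial content — the construction, via adding a non-reflecting stationary subset of $S^\lambda_{<\kappa^+}$, of a $\kappa^+$-closed, not $\kappa^{++}$-closed, $\lambda$-distributive forcing of size $\lambda$ — is already packaged in Theorem 4.2. The only point requiring verification is the cardinal-arithmetic computation of $2^{<\lambda}$ in $V[G]$, which is a routine consequence of GCH in the ground model together with the chain condition and size of $\mathbb{Q}$.
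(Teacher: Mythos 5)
Your proof is correct, but it takes a genuinely different and much lighter route than the paper's. You prepare with the single step $\Add(\kappa,\lambda)$, check (correctly) that GCH in the ground model gives $2^{\kappa}=2^{<\lambda}=\lambda$ in the extension while cofinalities are preserved, and then invoke the non-reflecting-stationary-set forcing of Theorem~4.1 (which you cite as 4.2) with $\mu=\kappa^{+}$; since that theorem only needs $2^{<\lambda}=\lambda$ and a regular $\mu<\lambda$, this does yield a $\kappa^{+}$-closed, not $\kappa^{++}$-closed, $\lambda$-strategically closed (hence $\lambda$-distributive), $\lambda^{+}$-c.c.\ forcing of size $\lambda$, and the literal statement follows. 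The paper instead forces with $\Add(\lambda,1)\ast\Add(\kappa,\lambda)$ and takes $\mathbb{R}$ to shoot a club through the fat stationary set $S$ coded by the $\Add(\lambda,1)$-generic $F$; the substance there is Claim~5.2(c), an elementary-submodel/master-condition argument for $\lambda$-distributivity, needed because the Abraham--Shelah theorem is blocked by the cardinal arithmetic. What that extra work buys --- and what your reduction loses --- is the property announced in the section's preamble: the paper's $\mathbb{R}$ destroys the stationarity of $\lambda\setminus S$, so it is a cardinal-preserving forcing that is \emph{not} $\lambda$-strategically closed, whereas your $\mathbb{P}$ preserves stationary subsets of $\lambda$ and is essentially the Section~4 example transplanted into a model of $2^{\kappa}=\lambda$. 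The paper's construction is also the one that generalizes to Theorem~5.3 (tolerating an intermediate cardinal-preserving forcing of size $<\lambda$) and thence to the singular-$\kappa$ case of Theorem~5.4; your argument proves the theorem as stated, but not the stronger phenomenon the section is actually after.
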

\begin{proof}
Let $\mathbb{P}_1=Add(\lambda, 1)$ and let $G_1$ be $\mathbb{P}_1$-generic over $V$. Let $F=\bigcup G_1.$ Then $F: \lambda \rightarrow 2.$ Let $\mathbb{P}_2=Add(\kappa, \lambda)^{V[G_1]}$ and let $G_2$ be $\mathbb{P}_2$-generic over $V[G_1].$ In $V[G_1*G_2], 2^\kappa=\lambda.$ We show that in $V[G_1*G_2],$ there is a non-trivial $\kappa^+$-closed, but not $\kappa^{++}$-closed, forcing notion of size $\lambda$ which preserves all cardinals. Let
\begin{center}
$S=\{ \alpha <\lambda : cf(\alpha)\leq \kappa$ or $(cf(\alpha)>\kappa, F(\alpha)=1) \}.$
\end{center}
$S$ is easily seen to be a stationary co-stationary subset of $\lambda$ in models  $V[G_1]$ and $V[G_1*G_2]$. Let $\mathbb{R} \in V[G_1*G_2]$ be the forcing notion for adding a club into $S$ by  approximations of cardinality $<\lambda.$
\begin{claim}
$(a)$ $\mathbb{R}$ is $\lambda^+$-$c.c.$

$(b)$ $\mathbb{R}$ is $\kappa^+$-closed but not $\kappa^{++}$-closed.

$(c)$ $\mathbb{R}$ is $\lambda$-distributive.
\end{claim}
\begin{proof}
$(a)$ and $(b)$ are trivial, so let's prove $(c).$ Let $K$ be $\mathbb{R}$-generic over $V[G_1*G_2]$,
 and let $h\in V[G_1*G_2*K], h:\mu \rightarrow On,$ where $\mu<\l$ is regular. We show that $h\in V[G_1*G_2].$

 Work in $V[G_1 * G_2]$. Let $F=\bigcup G_1: \lambda \rightarrow 2$ be as above and  let $\theta$ be large enough regular. By density arguments, we can find a continuous chain $\langle M_\a: \a<\mu  \rangle$ of elementary submodels of $H(\theta)$ such that:
\begin{enumerate}
\item Every initial segment of $\langle M_\a: \a<\mu  \rangle$ is in $V$,
\item $|M_\a| <\l,$
\item $\a<\b \Rightarrow M_\a \subseteq M_\beta,$
\item $\langle  M_\a: \a<\beta  \rangle\in M_{\beta+1},$
\item $\forall \a<\mu, M_\a\cap \l\in \l,$
\item $\forall \a<\mu, F(M_\a\cap \l)=1,$
\item $F(M\cap \l)=1,$ where $M=\bigcup_{\a<\mu}M_\a.$
\end{enumerate}
We may define the sequence $\langle M_\a: \a<\mu  \rangle$ in such a way that
$M$ contains all the relevant information. Let $\delta=M\cap\l$. Using the $\lambda$-closure of $Add(\lambda,1)$ and the chain condition of $Add(\kappa,\lambda)$, we can easily arrange $G_1\upharpoonright \delta=G_1\cap \Add(\delta, 1)^M$ is $\Add(\delta, 1)^M$-generic over $M$ and $\{\a<\delta: F(\a)=1 \}$ contains a club $C$ in $\delta$ (e.g. $C = \{ M_\a \cap \l \mid \alpha < \delta\}$). By our construction we may also assume that all the initial segments of $C$ are in $M[G_1\upharpoonright\delta].$ Let $G_2\upharpoonright \delta=G_2\cap \Add(\k,\delta)^{M[G_1\upharpoonright\delta]}.$ Then $G_2\upharpoonright\delta$ is
$\Add(\k,\delta)^{M[G_1\upharpoonright\delta]}$-generic over $M[G_1\upharpoonright\delta].$

Consider the model $N=M[G_1\upharpoonright\delta *G_2\upharpoonright\delta].$ Now we can decide values of $h$ inside $N$ and use $C$ to ensure that the conditions of $\mathbb{Q}$
used in the process go up to $\delta$. This will allow us to extend them finally to a single
condition deciding all the values of $h$. It follows that every initial segment of $h$ is in $N$ and hence $h\in V[G_1*G_2],$ as required.
\end{proof}
It follows that $\mathbb{R}$ preserves all the cardinals, and the theorem follows.
\end{proof}

Although we shoot a club through the fat stationary set $S = \{ \alpha < \kappa \mid F(\alpha) = 1\}$, we can't use Abraham-Shelah's general theorem from \cite{abraham-shelah}, since their cardinal arithmetic assumptions do not hold.

We now present a generalization of the above theorem, whose proof is essentially the same.
\begin{theorem}
Let $\k$ and $\l>\k^+$ be regular cardinals, and let $V[G_1*G_2]$ be a generic extension of $V$ by $\Add(\l,1)*\lusim{\Add}(\k, \l).$ Suppose $\mathbb{Q}\in V[G_1*G_2]$ is a cardinal preserving forcing notion of size $<\l,$ and let $H$ be $\mathbb{Q}$-generic over $V[G_1*G_2]$. Let $S=\{  \alpha <\lambda : cf(\alpha)\leq \kappa$ or $(cf(\alpha)>\kappa, \bigcup G_1(\alpha)=1)  \}$. Then $S$ is a stationary subset of $\l$ in $V[G_1*G_2*H],$ and if $\mathbb{R}$ is the forcing notion for adding a club subset of $S$, using  approximations of size $<\l,$ then
\begin{center}
$V[G_1*G_2*H]\models$``$\mathbb{R}$ is a $\k^+$-closed cardinal preserving forcing notion''.
\end{center}
\end{theorem}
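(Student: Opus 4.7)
The plan is to mirror the proof of the previous theorem, absorbing the additional small forcing $\mathbb{Q}$ into the argument. First, $S$ remains stationary in $V[G_1*G_2*H]$: since $|\mathbb{Q}|<\l$ and $\l$ is regular, $\mathbb{Q}$ is $\l$-c.c., so every club of $\l$ in $V[G_1*G_2*H]$ contains a club already in $V[G_1*G_2]$, where $S$ is stationary by the argument of the previous theorem. The $\k^+$-closure of $\mathbb{R}$ is routine, since a descending chain of length $<\k^+$ has union whose supremum has cofinality $\leq\k$ and hence lies in $S$. The $\l^+$-c.c.\ follows from a $\Delta$-system argument on closed bounded subsets of $\l$. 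The substance of the proof is therefore the $\l$-distributivity of $\mathbb{R}$.

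To prove $\l$-distributivity, fix a regular $\mu<\l$, a condition $p\in\mathbb{R}$, and an $\mathbb{R}$-name $\dot h$ for a function $\mu\to\On$; the goal is to find $q\leq p$ deciding every value of $\dot h$. Working in $V[G_1*G_2*H]$, I would build a continuous $\prec$-chain $\langle M_\a : \a<\mu\rangle$ of elementary submodels of $H(\theta)$ of size $<\l$, containing $p,\dot h,\mathbb{R},\mathbb{P}_1,\mathbb{P}_2,\mathbb{Q},G_1,G_2,H,F$, with $M_\a\cap\l\in\l$, $\langle M_\b : \b\leq\a\rangle\in M_{\a+1}$, each initial segment lying in $V$, and $F(M_\a\cap\l)=F(M\cap\l)=1$ for $M=\bigcup_{\a<\mu}M_\a$. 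Set $\d=M\cap\l\in S$. As in the previous theorem, $G_1\restricted\d$ is $\Add(\d,1)^M$-generic over $M$ and $G_2\restricted\d$ is $\Add(\k,\d)^{M[G_1\restricted\d]}$-generic over $M[G_1\restricted\d]$, while $C=\{M_\a\cap\l : \a<\mu\text{ limit}\}\subseteq S\cap\d$ has all of its proper initial segments in $M[G_1\restricted\d]$. Let $N=M[G_1\restricted\d * G_2\restricted\d]$; since $H\in M_0\subseteq N$ (using $|\mathbb{Q}|<\l$), the name $\dot h$ can be interpreted inside $N$. Inside $N$ I would decide the values of $\dot h$ one by one along a descending sequence of conditions of $\mathbb{R}$ whose domains trace $C$, producing after $\mu$ steps a condition $q\leq p$ with $\dom(q)=\d$ that decides every value of $\dot h$.

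The main obstacle is arranging for the initial segments of $\langle M_\a : \a<\mu\rangle$ to live in $V$ while each $M_\a$ already contains the generics $G_1, G_2, H$; this is what powers the Levy--Solovay-style genericity assertions used at the limit $\d$. The extra object $H$ is accommodated precisely because $|\mathbb{Q}|<\l$: one fixes a $\mathbb{P}_1*\dot{\mathbb{P}}_2$-name $\dot{\mathbb{Q}}$ for $\mathbb{Q}$ of hereditary size $<\l$ and a $\mathbb{P}_1*\dot{\mathbb{P}}_2*\dot{\mathbb{Q}}$-name $\dot H$ for $H$, absorbs them using the $\l$-closure of $\mathbb{P}_1$ and the $\l^+$-c.c.\ of $\mathbb{P}_2$, and then runs the chain construction exactly as in the previous theorem; the $V$-reflection of initial segments is undisturbed by the presence of $\mathbb{Q}$ or $H$.
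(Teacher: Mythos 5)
Your proposal takes the same route the paper intends: the paper offers no separate argument for this theorem, saying only that the proof is ``essentially the same'' as that of Theorem~5.1, and your plan --- run the elementary-submodel/distributivity argument of Theorem~5.1 verbatim, absorbing $\mathbb{Q}$ and $H$ via small names using $|\mathbb{Q}|<\l$, and get stationarity of $S$ in $V[G_1*G_2*H]$ from the $\l$-c.c.\ of $\mathbb{Q}$ --- is exactly the intended modification. The one assertion to repair is ``$H\in M_0\subseteq N$'': since $H\notin V[G_1*G_2]$ it cannot belong to $M_0$ (whose initial-segment chain lies in $V$) nor to $N=M[G_1\upharpoonright\d * G_2\upharpoonright\d]\subseteq V[G_1*G_2]$; rather, one puts the names $\dot{\mathbb{Q}},\dot H$ (of hereditary size $<\l$, obtainable from the $\k^+$-c.c.\ of $\Add(\k,\l)$ and the $\l$-closure of $\Add(\l,1)$, not merely its $\l^+$-c.c.) into $M_0$, so that $\mathbb{Q}\subseteq N$ and $H$ is $\mathbb{Q}$-generic over $N$, and then decides the values of $\dot h$ inside $N[H]$ --- after which the argument proceeds exactly as you describe.
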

As an application of the above theorem, let us prove an analogue of Theorem 5.1 for $\k$ singular.
\begin{theorem}
Suppose $\kappa$ is a strong cardinal and $\lambda>\kappa^+$ is regular. Then there is a forcing extension in which the following hold: $\kappa$ is a strong limit singular cardinal, $2^\kappa=\lambda$ and there exists a non-trivial $\kappa^+$-closed, but not $\kappa^{++}$-closed, cardinal preserving forcing notion of size $\lambda$.
\end{theorem}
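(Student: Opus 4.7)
The plan is to first blow up $2^\kappa$ to $\lambda$ and add the generic function $F$ needed for the club-adding construction, then Prikry-singularize $\kappa$, and finally verify in the resulting model that the club-adding forcing of Theorem 5.2 still does the job. Starting from $V_0 \models GCH$ with $\kappa$ strong, first perform a standard Laver-style preparation for the strongness of $\kappa$ so that the measurability of $\kappa$ survives subsequent $\kappa$-directed-closed forcing. Then force with $\mathbb{P}_1 * \dot{\mathbb{P}}_2 = \Add(\lambda,1) * \Add(\kappa,\lambda)$ to obtain $V_1$. In $V_1$ we have $F := \bigcup G_1 \colon \lambda \to 2$, $2^\kappa = \lambda$, all cardinals preserved, and (by a standard master-condition lift using $\lambda$-closure of $\mathbb{P}_1$ and $\kappa$-directed closure of $\mathbb{P}_2$) $\kappa$ still measurable. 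Let $U \in V_1$ be a normal measure on $\kappa$, let $\mathbb{Q}$ be Prikry forcing with $U$, and set $V_2 := V_1[H]$ for $H$ generic. Then $V_2$ satisfies: $\kappa$ is a strong limit singular cardinal of cofinality $\omega$ (Prikry adds no bounded subsets of $\kappa$), $2^\kappa = \lambda$, and all cardinals and cofinalities other than $\kappa$ are preserved.

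In $V_2$ define $S := \{\alpha < \lambda : \cf(\alpha) \leq \kappa \text{ or } F(\alpha) = 1\}$; as in Theorem 5.2, $S$ is stationary and co-stationary. Let $\mathbb{R}$ be the forcing adding a club through $S$ by bounded approximations ordered by end-extension. The cardinal arithmetic $\lambda^{<\lambda} = \lambda$ in $V_2$ (which persists from $V_0$ through $\mathbb{P}_1 * \dot{\mathbb{P}}_2$ and then through the $\kappa^+$-c.c.\ Prikry forcing) gives $|\mathbb{R}| = \lambda$; $\mathbb{R}$ is $\kappa^+$-closed because the supremum of a decreasing union of conditions of length $<\kappa^+$ has cofinality $\leq \kappa$ and so lies in $S$; and $\mathbb{R}$ is not $\kappa^{++}$-closed, witnessed by any length-$\kappa^+$ descending sequence cofinal in a point of cofinality $\kappa^+$ lying in $\lambda \setminus S$. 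Since $\lambda^+$-c.c.\ is immediate from $|\mathbb{R}| = \lambda$, cardinal preservation reduces to showing that $\mathbb{R}$ is $\lambda$-distributive in $V_2$.

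The main obstacle is this distributivity: Theorem 5.2 required $|\mathbb{Q}| < \lambda$, whereas Prikry has size $\lambda$. The plan is to run the chain-of-elementary-submodels argument of Theorem 5.1 over $V_1$ applied to the two-step iteration $\mathbb{Q} * \dot{\mathbb{R}}$, exploiting the Prikry property of $\mathbb{Q}$: every statement in the $\mathbb{Q}$-forcing language is decided by some condition of the form $(s, A)$ with a fixed stem $s$ and $A \in U$. Given an $\mathbb{R}$-name $\dot h$ for a function $\mu \to \mathrm{On}$ with $\mu < \lambda$, build in $V_1$ a continuous chain $\langle M_\alpha : \alpha < \mu\rangle$ of elementary submodels of $H(\theta)^{V_1}$ of size $<\lambda$, whose initial segments lie in $V_0$, with $F(M_\alpha \cap \lambda) = 1$ for each $\alpha$ and $F(M \cap \lambda) = 1$ for $M = \bigcup_\alpha M_\alpha$; set $\delta := M \cap \lambda$. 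Recursively along the chain, extend to a descending sequence of conditions in $\mathbb{Q} * \dot{\mathbb{R}}$ sharing a common Prikry stem (shrinking the measure-one parts inside each $M_{\alpha+1}$ by the Prikry property), whose $\mathbb{R}$-components decide successive values of $\dot h$; amalgamate at limit stages using the $\kappa^+$-closure of the ``tree'' component of $\mathbb{Q}$ together with the $\kappa^+$-closure of $\mathbb{R}$. Because $F(\delta) = 1$, the union can be closed off at $\delta$ into a single condition of $\mathbb{Q} * \dot{\mathbb{R}}$ that decides all values of $\dot h$, placing $h$ in $V_1 \subseteq V_2$. The technical heart is verifying that the Prikry-shrinking amalgamates correctly at limit stages of the chain, which is precisely where the hypothesis $|\mathbb{Q}| < \lambda$ of Theorem 5.2 was used and must now be replaced by direct appeal to the Prikry property.
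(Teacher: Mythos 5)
Your architecture is the same as the paper's: arrange (via the Gitik--Shelah indestructibility, cited in the paper) that $\kappa$ retains enough of its large cardinal character after $\Add(\lambda,1)\ast\Add(\kappa,\lambda)$, singularize $\kappa$ by Prikry forcing, and then shoot a club through the set $S$ of Theorem 5.3 by bounded conditions. Two remarks on the set-up: there is no ``Laver-style preparation'' for strong cardinals making measurability indestructible under arbitrary $\kappa$-directed-closed forcing --- the correct tool is exactly the Gitik--Shelah theorem, which gives indestructibility under $\Add(\kappa,\delta)$ and under $\kappa^+$-weakly closed Prikry-property forcings (the latter is also what keeps the Magidor option open for uncountable target cofinalities). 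More importantly, you have put your finger on the one place where the paper's own argument is thin: Prikry forcing with a normal measure of $V_1$ has $2^\kappa=\lambda$ many conditions, so the hypothesis $|\mathbb{Q}|<\lambda$ of Theorem 5.3 literally fails, and some additional argument is needed. Identifying that is to your credit.

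Your proposed repair, however, has a genuine gap. You end by producing a single condition of $\mathbb{Q}\ast\dot{\mathbb{R}}$ that ``decides all values of $\dot h$, placing $h$ in $V_1$.'' That conclusion is false and the construction leading to it cannot be carried out. First, $\lambda$-distributivity of $\mathbb{R}$ over $V_2=V_1[H]$ only asserts $h\in V_2$; taking $\mu=\omega$ and $\dot h$ the check-name of the Prikry sequence shows $h$ need not lie in $V_1$, so no condition can decide all of its values. Correspondingly, the Prikry property with a fixed stem decides \emph{statements}, not ordinal values of names that genuinely depend on the Prikry sequence, so your recursion stalls at its first step; and even setting that aside, a recursion of length $\mu\geq\kappa$ that shrinks the measure-one part at every step cannot be closed off, since $U$ is only $\kappa$-complete. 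The repair is to aim at the right target: run the density argument in $V_2$, reducing $\dot h(\alpha)$ at each stage to a $\mathbb{Q}$-name rather than an ordinal, and replace the use of $|\mathbb{Q}|<\lambda$ by the $\kappa^+$-c.c.\ of Prikry forcing, which guarantees that for an elementary submodel $M\prec H(\theta)^{V_1}$ with $\kappa+1\subseteq M$ one still has $M[\,\cdots][H]\cap\lambda=M\cap\lambda$; this keeps the suprema of the domains of the conditions trapped at ordinals where $F$ takes the value $1$, both at limit stages of cofinality $>\kappa$ and at the final closure point $\delta=M\cap\lambda$. With that reformulation the argument goes through, but as written your final step proves a false statement and would not compile into a proof.
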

\begin{proof}
By results of Gitik-Shelah and Woodin \cite{gitik-shelah}, we can assume that $\k$ is indestructible under the following kind of forcing notions:
\begin{itemize}
\item $\Add(\k, \delta),$ for any ordinal $\delta,$
\item Any $\k^+$-weakly closed forcing notion which satisfies the Prikry condition.
\end{itemize}
Fix any regular cardinal $\delta<\k.$ Force with $\Add(\l,1)*\lusim{\Add}(\k, \l)$ and let $G_1*G_2$ be $\Add(\l,1)*\lusim{\Add}(\k, \l)$-generic over $V$. Then $\k$ remains strong in $V[G_1*G_2].$ Let $\mathbb{Q}$ be the Prikry or Magidor forcing for changing the cofinality of $\k$ into $\delta$ and let $H$ be $\mathbb{Q}$-generic over $V[G_1*G_2].$ Then by Theorem 5.3, we can find, in $V[G_1*G_2*H],$  a $\k^+$-closed, but not $\k^{++}$-closed cardinal preserving forcing notion.
\end{proof}

\section{Consistently, for all $\k, 2^\kappa>\k^+$ and there is a $\kappa^+$-closed cardinal preserving forcing notion}
In this section, we again consider statement $(2),$ and prove a global consistency result, which is, in some sense, in the opposite direction of Theorem 3.1.
\begin{theorem}
Assuming the existence of a strong cardinal and infinitely many inaccessible cardinals above it, it is consistent that $GCH$ fails everywhere, and for each regular uncountable cardinal $\k,$ there exists a non-trivial $\k$-closed forcing notion which preserves all cardinals.
\end{theorem}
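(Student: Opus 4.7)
The plan is to build a reverse Easton iteration over a ground model $V_0$ (with the strong cardinal $\kappa$ and $\omega$-many inaccessibles $\kappa_0<\kappa_1<\dots$ above it) that drives $GCH$ to failure at every cardinal and, at each $V_0$-regular cardinal $\mu$, plants a Cohen function $F_\mu\colon \lambda_\mu \to 2$ available for use in the construction of Theorem 5.1. As in the proof of Theorem 5.4, a preliminary Gitik--Shelah--Woodin preparation makes $\kappa$ and each $\kappa_n$ indestructible under the relevant $\Add(\cdot,\cdot)$ and $\kappa_n^+$-weakly closed Prikry-type forcings.

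The iteration $\mathbb{P}$ proceeds with Easton support. At every $V_0$-regular $\mu$ it forces with $\Add(\lambda_\mu,1)*\lusim{\Add}(\mu,\lambda_\mu)$, where $\lambda_\mu>\mu^+$ is regular and is chosen large enough that the portion of $\mathbb{P}$ strictly between stage $\mu$ and stage $\lambda_\mu$ has total size below $\lambda_\mu$; this gives $2^\mu=\lambda_\mu$ in the final model, so $GCH$ fails at $\mu$. At $\kappa$ and at each $\kappa_n$, immediately after the corresponding Cohen stage, a Prikry or Magidor forcing (as in Theorem 5.4) is inserted to collapse the cofinality of that cardinal to some small prescribed value, turning $\kappa$ and the $\kappa_n$'s into strong-limit singulars with $2^{\kappa_n}=\lambda_{\kappa_n}$ large. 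By spreading the $\kappa_n$'s cofinally through the $\aleph$-hierarchy of $V_0$ and choosing the $\lambda_\mu$'s appropriately, the final model $W$ satisfies $GCH$ at no cardinal, and every regular uncountable cardinal of $W$ is a successor $\mu^+$.

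Given any regular uncountable $\kappa^*=\mu^+$ of $W$, set $\lambda=\lambda_\mu$ and $F_\mu=\bigcup G_1^\mu$ (the Cohen function $\lambda\to 2$ added at stage $\mu$). Define
\[
S_\mu=\{\alpha<\lambda: \cf(\alpha)\leq\mu \text{ or } (\cf(\alpha)>\mu \text{ and } F_\mu(\alpha)=1)\}
\]
and let $\mathbb{R}^{\kappa^*}$ be the forcing adding a club through $S_\mu$ by ${<}\lambda$-approximations. Let $W_\mu$ be the intermediate model right after the stage-$\mu$ Cohen forcing. By Theorem 5.1 (if $\mu$ is $V_0$-regular) or Theorem 5.4 (if $\mu=\kappa$ or $\mu=\kappa_n$), $\mathbb{R}^{\kappa^*}$ is non-trivial, $\kappa^*$-closed, of size $\lambda$, $\lambda$-distributive and $\lambda^+$-$c.c.$ in $W_\mu$, hence cardinal preserving there. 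To transfer this to $W$: the tail of $\mathbb{P}$ from stage $\lambda$ onwards is $\lambda$-closed and so adds no ${<}\lambda$-sequences, while the part strictly between stage $\mu$ and stage $\lambda$ has size ${<}\lambda$ and is cardinal preserving, so Theorem 5.3 applies and yields that $\mathbb{R}^{\kappa^*}$ is still $\kappa^*$-closed and cardinal preserving in $W$.

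The main obstacle will be the coherent choice of the $\lambda_\mu$'s together with the placement of the Prikry/Magidor stages: one must ensure that for \emph{every} $\mu$ simultaneously, (i) the intervening stages have total size below $\lambda_\mu$, (ii) the Prikry generics at the $\kappa_n$'s do not disturb the Cohen generics at the intervening regulars, and (iii) the elementary-substructure argument of Theorem 5.1 used to prove $\lambda$-distributivity continues to go through when the reflecting chain $\langle M_\alpha : \alpha<\mu\rangle$ must be built inside the final model $W$ with its initial segments recoverable in $V_0$. The indestructibility of $\kappa$ and the $\kappa_n$'s, together with a careful stage-by-stage factorisation of $\mathbb{P}$, should reduce each of these verifications to arguments already present in the proofs of Theorems 5.1, 5.3 and 5.4.
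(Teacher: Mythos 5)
Your proposal has a genuine gap at its core step: the construction of a model in which $GCH$ fails everywhere. A reverse Easton iteration of the form ``at each regular $\mu$ force $\Add(\lambda_\mu,1)*\lusim{\Add}(\mu,\lambda_\mu)$'' cannot be made to work, even with Prikry/Magidor stages inserted at $\kappa$ and at the countably many inaccessibles $\kappa_n$. First, at the level of individual stages: once stage $\mu$ has forced $2^\mu=\lambda_\mu>\mu^+$, the very next Cohen stage $\Add(\mu',\lambda_{\mu'})$ with $\mu'\leq\lambda_\mu$ is taken over a model with $2^{<\mu'}>\mu'$, so it collapses $2^{<\mu'}$ to $\mu'$ and undoes the failure of $GCH$ just arranged (this is essentially Lemma 3.3 of the paper working against you). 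The only escape is to organize the cardinals into disjoint blocks $[\mu,2^\mu)$ on which the continuum function is constant; but then the set of block endpoints is a \emph{closed} unbounded set $T$ of cardinals, each of which must be a regular limit cardinal of the final model (regularity of $2^\mu$ is forced on you because, by Lemma 3.3 again, a singular value of $2^\mu$ makes every $\mu^+$-closed forcing of that size collapse cardinals, which is the opposite of what Theorem 6.1 needs). Since cardinals are preserved and $GCH$ holds in $V_0$, the members of $T$ must be inaccessible in $V_0$. No closed unbounded set of ground-model inaccessibles below $\kappa$ exists unless club-many of them have their cofinalities changed generically, and changing the cofinality of club-many inaccessibles below $\kappa$ while preserving cardinals is exactly what extender-based Radin forcing does and what no Easton iteration augmented by $\omega$-many Prikry forcings at the fixed points $\kappa,\kappa_0,\kappa_1,\dots$ can do: the problematic limit points occur club-often \emph{below} $\kappa$, not just at the $\kappa_n$'s. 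This is precisely why the paper's proof of Theorem 6.1 does not iterate; its Lemma 6.3 runs the Merimovich--Friedman--Golshani extender-based Radin machinery (generalizing Foreman--Woodin) off a single $\inacc(\kappa)^+$-strong embedding to produce, in one step, a cardinal-preserving extension in which $2^\lambda$ is weakly inaccessible for all $\lambda<\kappa$ and $2^\mu=2^\lambda$ for $\lambda<\mu<2^\lambda$, and then truncates at the least inaccessible $\kappa_*$ --- a truncation your plan also omits but needs, since nothing in your construction controls $GCH$ at or above $\kappa$.

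A secondary, repairable difference: for the cardinal-preserving forcing itself you invoke the club-shooting forcing of Theorems 5.1/5.3/5.4, which requires carrying the specific $\Add(\lambda,1)*\lusim{\Add}(\mu,\lambda)$ generic structure (and the elementary-chain genericity argument) all the way into the final model --- a heavy bookkeeping burden that your own ``main obstacle'' paragraph only gestures at. The paper avoids this entirely: its Lemma 6.2 observes that the cardinal arithmetic ``$\lambda=2^\mu$ is regular and $2^{<\lambda}=\lambda$'', which holds outright in the final model, already suffices for Theorem 4.1 to produce a $\mu^+$-closed, $\lambda$-distributive, $\lambda^+$-c.c.\ forcing of size $\lambda$ (adding a non-reflecting stationary subset of $S^{\lambda}_{<\mu^+}$) that preserves all cardinals. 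I would recommend adopting both of the paper's choices: Radin forcing for the global $GCH$ pattern, and the non-reflecting-stationary-set forcing, which depends only on the resulting cardinal arithmetic, for the witnessing poset.
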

\begin{proof}
To prove the theorem, we need two auxiliary lemmas.
\begin{lemma}
Assume $2^\k$ is weakly inaccessible, and for all $\k<\l<2^\k, 2^\l=2^\k.$ Then there exists a $\k^+$-closed forcing notion which preserves all cardinals.
\end{lemma}
\begin{proof}
By Lemma 4.1.
\end{proof}
The next lemma is a generalization of the main theorem of Foreman-Woodin \cite{foreman-woodin}, where the use of a supercompact cardinal with infinitely many inaccessible
cardinals  above it,  is replaced by the much weaker assumption of the existence of a strong cardinal with an inaccessible above it.
\begin{lemma}
Assume $GCH$ holds, $\k$ is a strong cardinal and there exists an inaccessible cardinal above $\k$. Then there is a generic extension in which $\k$ remains inaccessible, for all $\l<\k, 2^\l$ is weakly inaccessible and $\l<\mu<2^\l$ implies $2^\mu=2^\l.$
\end{lemma}
\begin{proof}
The proof is similar to the proof of Lemma 3.4, so we follow that proof and just mention the changes which are required.
Let $V^*$ denote the ground model.
Also, for an ordinal $\alpha$, let us denote the next inaccessible cardinal above $\alpha$ by $\inacc(\alpha)$.

Let $j: V^* \rightarrow M^*$ be an elementary embedding witnessing the $\inacc(\kappa)^+$-strongness of $\kappa$ and let $\bar{E} \in V^*$
be an extender sequence system derived from $j$, $\bar{E}=\langle \bar{E}_\a: \a\in \dom(\bar{E}) \rangle$, where $\dom(\bar{E})=[\k,  \inacc(\kappa))$ and $\len(\bar{E})=\k^+.$ Then the ultrapower $j_{\bar{E}}\colon V^* \to M^*_{\bar{E}}\simeq Ult(V^*,\bar{E})$ has critical point $\k$ and $M^*_{\bar{E}}$ contains $V^*_{\inacc(\kappa)}$. As before, consider the resulting elementary embeddings $j_{\bar{E}}, j_\tau, k_\tau, i_{\tau', \tau}$ and $i_{\tau, \bar{E}}$, where
$\tau' < \tau < \kappa^+.$
Also factor through the normal ultrafilter to get the normal measure $U$ and embeddings $i_U, i_{U, \tau}$
and $i_{U, \bar{E}}$.

Force with
\begin{center}
$\MPB=\Add(\inacc(\kappa), \inacc(\kappa)^{+3})$.
\end{center}
Let $G$ be $\MPB$-generic over $V^*$ and let $V=V^*[G]$. As before, we can find suitable
filters $G_{\bar{E}}, G_U$ and $G_\tau, \tau<\len(\bar{E}),$ such that $G_{\bar{E}}$
is $j_{\bar{E}}(\PP)$-generic over $M^*_{\bar{E}},$ $G_U$ is $i_U(\PP)$-generic
over $N^*$ and $G_\tau$ is $j_\tau(\PP)$-generic over $M^*_\tau$ and such that the resulting
diagram commutes.

Set
$\MPB_U= \Add(\inacc(\kappa), i_U(\kappa))^{N^*},$
and define the forcing notions $\MPB_\tau$
and $\MPB_{\bar{E}}$ similarly, where $i_U, N^*$ are replaced with
$j_\tau, M^*_\tau$ and $j_{\bar{E}}, M^*_{\bar{E}}$ respectively.
We show that there are
 $I_{U}, I_{\tau}$ and $I_{\bar{E}}$ in $V=V^*[G]$ such that
 $I_{U}$ is $\mathbb{P}_{U}$-generic over $N^{*}[G_{U}]$,
$I_{\tau}$ is $\mathbb{P}_{\tau}$-generic over $M_{\tau}^{*}[G_{\tau}]$, $I_{\bar{E}}$ is $\mathbb{P}_{\bar{E}}$-generic over $M_{\bar{E}}^{*}[G_{\bar{E}}]$, and the generics are so that the corresponding diagram lifts.

Set $I_{\bar{E}}=G \cap \MPB_{\bar{E}}.$ We show that it is $\MPB_{\bar{E}}$-generic over $M_{\bar{E}}^{*}[G_{\bar{E}}]$. Using Easton's lemma, it suffices to
show genericity over $M_{\bar{E}}^{*}.$ Let $A \subseteq \mathbb{P}_{\bar{E}}$ be a maximal antichain in $M_{\bar{E}}^{*}$
and $X= \bigcup \{\dom(p): p \in A    \}.$ Then $|X| \leq \inacc(\kappa),$ and $A$ is a maximal antichain of $\Add(\inacc(\kappa), X)^{M_{\bar{E}}^{*}}$.
As $\Add(\inacc(\kappa), X)^{M_{\bar{E}}^{*}}=\Add(\inacc(\kappa), X)^{V^*}$, $A$ is a maximal antichain of $\Add(\inacc(\kappa), X)^{V^*},$  hence a maximal antichain
of $\Add(\inacc(\kappa), \inacc(\kappa)^{+3})^{V^*}$. Let $p \in A \cap G(\kappa).$ Then $p \in A \cap I_{\bar{E}}.$

Now set
\begin{center}
$I_U = \langle i_{U, \bar{E}}^{-1''}[I_{\bar{E}}]    \rangle,$ ~the filter generated by ~ $i_{U, \bar{E}}^{-1''}[I_{\bar{E}}]$
\end{center}
and
\begin{center}
$I_\tau = \langle i_{\tau, \bar{E}}^{-1''}[I_{\bar{E}}]    \rangle,$ ~the filter generated by ~ $i_{\tau, \bar{E}}^{-1''}[I_{\bar{E}}]$
\end{center}
It is easily seen that $I_U$ and $I_\tau$ are respectively $\MPB_U$-generic over $N^*$ and $\MPB_\tau$-generic over $M^*_\tau$. Now by applying Easton's lemma, we can conclude the desired result.

Let $R(-,-)$ be a function such that

\begin{center}
$i_{U}^{2}(\kappa, i_{U}(\kappa))=\mathbb{P}_U,$

\end{center}
where $i_{U}^{2}$ is the second iterate of $i_{U}.$ Working in $V=V^*[G],$ define the new extender sequence system
$\bar{F}= \langle \bar{F}_{\alpha}: \alpha \in \dom(\bar{F})\rangle$ as before.

Working in $V$, let $\mathbb{P}_{\bar{F}}$ be the corresponding extender based Radin forcing, and let $H$ be $\mathbb{P}_{\bar{F}}$-generic over $V$. By reflection, we may assume that each $\bar{\mu}$ which appears in some condition in $\mathbb{P}_{\bar{F}}$ has $\dom(\bar{\mu})=[\k^0(\bar{\mu}), \inacc(\k^0(\bar{\mu}))).$ For $\alpha \in \dom(\bar{F})$ set

\begin{center}
$C_{H}^{\alpha} = \{ \max\kappa(p_{0}^{\bar{F}_{\alpha}}): p \in H \}.$

\end{center}

The following clauses can be proved as before:

\begin{enumerate}
\item $V[H]$ and $V$ have the same cardinals.

\item $\kappa$ remains strongly inaccessible in $V[H]$

\item $C_{H}^{\kappa}$ is a club in $\kappa,$

\item If $\l < \l_* $ are two successive points in $C_{H}^{\kappa}$, then $2^\l = \inacc(\lambda)$
 and $2^{\inacc(\l)}=\l_*,$

\item Let $\gamma_0=\min(C_{H}^{\kappa}).$ If $\gamma_0\leq \l< \mu<2^\l<\k$, then $2^\mu=2^\l.$

\end{enumerate}
It follows immediately that
\begin{center}
$V[H]\models$`` If $\gamma_0\leq \l<\k,$ then $2^\l$ is weakly inaccessible and $\l< \mu<2^\l \Rightarrow 2^\mu=2^\l$''.
\end{center}
Force over $V[H]$ with $Add(\aleph_0, \gamma_0),$ and let $K$ be $Add(\aleph_0, \gamma_0)$-generic over $V[H].$ Then
\begin{center}
 $V[H][K]\models$``If  $\l<\k,$ then $2^\l$ is weakly inaccessible and $\l< \mu<2^\l \Rightarrow 2^\mu=2^\l.$
\end{center}
The lemma follows.
\end{proof}
We are now ready to complete the proof of Theorem 6.1.

Let $V^*$ be a model of $GCH+\k$ is a strong cardinal $+$ there are infinitely many inaccessible cardinals above $\k.$ By Lemma 6.3 there exists a generic extension $V$ of $V^*$ in which $\k$ remains measurable and for all infinite cardinals $\l<\k, 2^\l$ is weakly inaccessible and if $\l<\mu<2^\l,$ then $2^\mu=2^\l.$ Let $\k_*\leq \k$ be the least inaccessible of $V$, and consider the $ZFC$ model $V_{\k_*}.$ By Lemma 6.2, $V_{\k_*}$ is as required and the theorem follows.
\end{proof}
\begin{remark}
The use of an inaccessible cardinal above the strong cardinal is essentially to help us in constructing a generic extension in which the power function is such that for all infinite cardinals $\kappa, 2^\kappa$ is weakly inaccessible and if $\kappa <\lambda<2^\kappa$ then $2^\lambda=2^\kappa.$ It seems that we need such a behavior of the power function if we want to produce cardinal preserving forcing notions using Theorem 4.1, as otherwise the corresponding forcing will have a larger size and we will face some troubles in checking the chain condition.
\end{remark}

Note that in the above arguments, we always have $\k<\l<2^\k \Rightarrow 2^\l=2^\k,$ so it is natural to ask if it is consistent that $\k^+ < 2^\k < 2^{<2^\k}$ and there exists a $\k^+$-closed
( but not $\k^{++}$-closed) forcing notion which preserves all cardinal.
The next lemma gives a positive answer to this question.
\begin{lemma}
Suppose $GCH$ holds and $\k < \mu < \delta < \l$ are regular cardinals. Let $\mathbb{P}=\Add(\k, \delta),$ $\mathbb{Q}=\Add(\mu, \l)$ and let $G\times H$ be $\mathbb{P}\times \mathbb{Q}$-generic over $V$. Let $\mathbb{R}=\Add(\delta, 1)_{V[G]}.$ Then the following hold in $V[G \times H]:$

$(a)$ $\mathbb{R}$ is $\mu$-closed but not $\mu^+$-closed.

$(b)$ $\mathbb{R}$ is $\delta$-distributive and has size $\delta$.

In particular $V[G\times H]\models$`` $2^\k=\delta, 2^\mu=\l$ and there exists a $\mu$-closed (but not $\mu^+$-closed ) forcing notion of size $\delta=2^{<\mu}$ which preserves all cardinals''.
\end{lemma}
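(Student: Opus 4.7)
The plan is to split the argument into two applications of Easton's lemma: once to $(\mathbb{P},\mathbb{Q})$ inside $V$ to pass from $V$ to $V[G]$, and once to $(\mathbb{Q},\mathbb{R})$ inside $V[G]$ to pass to $V[G\times H]$, supported by some preliminary cardinal arithmetic.

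\emph{Cardinal bookkeeping and part (a).} In $V[G]$, a nice-name count under $GCH$ in $V$ gives $2^\nu=\delta$ for every $\k\le\nu<\delta$, so $2^{<\delta}=\delta$ and $|\mathbb{R}|=\delta$. Since $\mathbb{Q}$ is $\mu$-closed and $\mu^+$-c.c.\ in $V$, it preserves all cardinals; in $V[G\times H]$ one likewise computes $|\mathbb{R}|=\delta$, $2^\k=\delta$, $2^\mu=\l$ and $2^{<\mu}=\delta$, which together with part (b) will give the ``in particular'' clause. For $\mu$-closedness of $\mathbb{R}$ in $V[G\times H]$, apply Easton to $(\mathbb{P},\mathbb{Q})$ in $V$: $\mathbb{P}$ is $\k^+$-c.c.\ (so $\mu$-c.c.\ since $\k^+\le\mu$) and $\mathbb{Q}$ is $\mu$-closed, hence $\mathbb{Q}$ is $\mu$-distributive over $V[G]$; every descending $<\mu$-sequence of $\mathbb{R}$-conditions in $V[G\times H]$ therefore lies in $V[G]$ and has a lower bound there by $\delta$-closedness of $\mathbb{R}$ in $V[G]$. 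For the failure of $\mu^+$-closedness, take any Cohen subset $s\colon\mu\to 2$ added by $\mathbb{Q}$: each $s\restricted\alpha$ with $\alpha<\mu$ lies in $V[G]$ by $\mu$-distributivity of $\mathbb{Q}$, so $\langle s\restricted\alpha:\alpha<\mu\rangle$ is a genuine descending $\mu$-chain in $\mathbb{R}$, yet a lower bound $r\in\mathbb{R}\subseteq V[G]$ would recover $s=r\restricted\mu$ in $V[G]$, contradicting $s\notin V[G]$.

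\emph{Part (b).} The $\delta$-distributivity of $\mathbb{R}$ in $V[G\times H]$ will follow from Easton's lemma applied inside $V[G]$ to $(\mathbb{Q},\mathbb{R})$ at parameter $\delta$. Since $\mathbb{R}$ is $\delta$-closed in $V[G]$ by construction, the only hypothesis left is that $\mathbb{Q}$ is $\delta$-c.c.\ in $V[G]$, and this is the main obstacle: the $\Delta$-system bound computed inside $V[G]$ gives only $(2^{<\mu})^{V[G]+}=\delta^+$-c.c., one level too weak. To obtain $\mu^+$-c.c., first verify that $\mathbb{P}\times\mathbb{Q}$ is $\mu^+$-c.c.\ in $V$ by a $\Delta$-system argument on the combined supports of pairs $(p,q)$: a supposed antichain of size $\mu^+$ admits a $\Delta$-subsystem of the same size under $GCH$, and the combined root carries at most $2^{<\k}\cdot 2^{<\mu}=\mu$ possible restrictions, so pigeonhole produces two members sharing the restriction on the root and disjoint outside, hence compatible. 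Then the standard transfer argument (a hypothetical $\mathbb{Q}$-antichain $\{q_\alpha:\alpha<\mu^+\}$ in $V[G]$ gives, by picking $p_\alpha\in G$ forcing $\check{q_\alpha}\in\dot A$, a $V$-antichain $\{(p_\alpha,q_\alpha):\alpha<\mu^+\}$ in $\mathbb{P}\times\mathbb{Q}$) shows $\mathbb{Q}$ is $\mu^+$-c.c.\ in $V[G]$. Since $\mu^+\le\delta$, this is $\delta$-c.c., so Easton's lemma delivers $\delta$-distributivity. Cardinal preservation by $\mathbb{R}$ in $V[G\times H]$ then follows from $\mu$-closedness (below $\mu^+$), $\delta$-distributivity (in $[\mu^+,\delta]$), and the $\delta^+$-c.c.\ given by $|\mathbb{R}|=\delta$ (above $\delta$).
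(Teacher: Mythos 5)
Your proposal is correct, and it rests on the same two pillars as the paper's argument: Easton's lemma for the pair $(\mathbb{P},\mathbb{Q})$ over $V$ to get the $\mu$-closure of $\mathbb{R}$ in $V[G\times H]$, and a chain-condition-plus-closure analysis of the pair $(\mathbb{Q},\mathbb{R})$ over $V[G]$ to get $\delta$-distributivity. The route to the second pillar, however, is genuinely different. The paper never verifies that $\mathbb{Q}$ is $\delta$-c.c.\ in $V[G]$; instead it proves directly that $\mathbb{Q}$ is $\delta$-c.c.\ in $V[G][K]$ (where $K$ is $\mathbb{R}$-generic), by using the $\delta$-closure of $\mathbb{R}$ in $V[G]$ to pull a putative antichain of size $\delta$ down into $V[G]$ and then the $\delta$-Knaster property of $\Add(\kappa,\delta)$ to pull it down into $V$, contradicting the $\mu^+$-c.c.\ of $\Add(\mu,\lambda)$ there; it then runs the nice-name argument ($x\in V[G][H][K]=V[G][K][H]$, small $\mathbb{Q}$-name in $V[G][K]$, hence in $V[G]$ by distributivity of $\mathbb{R}$, hence $x\in V[G][H]$) by hand. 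You instead establish the stronger fact that $\mathbb{Q}$ is already $\mu^+$-c.c.\ in $V[G]$, via the $\mu^+$-c.c.\ of the product $\mathbb{P}\times\mathbb{Q}$ in $V$ (a $\Delta$-system argument that works because $2^{<\mu}=\mu$ under $GCH$) together with the standard transfer to the extension, and then you invoke Easton's lemma for $(\mathbb{Q},\mathbb{R})$ over $V[G]$ as a black box. Both verifications are sound: yours isolates the chain condition in the cleaner model $V[G]$ and correctly diagnoses why the naive count inside $V[G]$ only yields $\delta^+$-c.c., while the paper's version is self-contained and shows exactly where the closure of $\mathbb{R}$ and the Knasterness of $\mathbb{P}$ enter. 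A small bonus of your write-up is that you actually prove the ``not $\mu^+$-closed'' clause, via a $\mu$-chain of initial segments of a $\mathbb{Q}$-generic function $s\colon\mu\to 2$ whose lower bound in $\mathbb{R}\subseteq V[G]$ would force $s\in V[G]$; the paper's proof passes over this point in silence.
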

\begin{remark}
In fact our proof shows that it suffices to have:
\begin{itemize}
\item $\mathbb{P}$ is $\delta$-Knaster and it forces $2^{<\delta}=\delta,$
\item $\mathbb{Q}$ is $\mu$-closed and $\delta$-$c.c.$
\end{itemize}

\end{remark}
\begin{proof}
Recall that since $\mathbb{P}$ is $\kappa^+$-$c.c.$ and $\mathbb{Q}$ is $\mu$-closed, $\mu \geq \kappa^+$, by Easton's lemma,
\begin{center}
$\Vdash_{\mathbb{P} } \mathbb{Q}$ is
$\mu$-distributive, and
$\Vdash_{\mathbb{Q} } \mathbb{P}$ is $\kappa^+$-c.c.
\end{center}
Note that $V[G]\models \mathbb{R}$ is $\delta$-closed, and in particular $\mu$-closed. $\mathbb{Q}$ does not add any sequence of elements of $\mathbb{R}$ of length shorter than $\mu$, so $\mathbb{R}$ is still $\mu$-closed in $V[G][H]$.
Let us show now that $\mathbb{R}$ is $\delta$-distributive in $V[G][H]$ (and in particular it does not collapse cardinals).

For this end, we first show that $\mathbb{Q}$ is $\delta$-$c.c.$ after forcing with $\mathbb{P}\ast \lusim{\mathbb{R}}$. Assume otherwise, and let $\mathcal{A}\subseteq \mathbb{Q}$ be an antichain of cardinality $\delta$. Since $\mathbb{R}$ is $\delta$-closed in $V[G]$, one can decide the values of $\mathcal{A}$ by induction on $\alpha < \delta$ and obtain an antichain of cardinality $\delta$ in $V[G]$.
Let $\langle a_i \mid i < \delta \rangle$ enumerates $\mathcal{A}$ in $V[G]$ and let $p_i \Vdash$`` $\lusim{a}_i = \check{q_i}$'' by some condition $p_i$ in $\mathbb{P}$, where $\lusim{a}_i$ is a $\mathbb{P}$-name for $a_i$. Since $\mathbb{P}$ is $\delta$-Knaster there is $I\subset \delta$ such that for every $i, j \in I$, $p_i$ is compatible with $p_j$. Therefore, $q_i \perp q_j$ for every $i \neq j$ in $I$, and this implies that already in $V$ there is an antichain in $\mathbb{Q}$ of cardinality $\delta$, which is a contradiction.

Let $K$ be a $V[G][H]$-generic for $\mathbb{R}$ and let $x$ be a sequence of ordinals of length $<\delta$ in $V[G][H][K]$. Since $V[G][H][K] = V[G][K][H]$, and $\mathbb{Q}$ is $\delta$-$c.c.$ in $V[G][K]$, there is a set $\lusim{x}$ of cardinality $<\delta$ which is a $\mathbb{Q}$-name for $x$ (so $\lusim{x} \subseteq \mathbb{Q}\times On$).
Since $\mathbb{R}$ is $\delta$-distributive in $V[G]$, $\lusim{x}\in V[G]$. Therefore, $x\in V[G][H]$, as wanted.
\end{proof}

School of Mathematics, Institute for Research in Fundamental Sciences (IPM), P.O. Box:
19395-5746, Tehran-Iran.

E-mail address: golshani.m@gmail.com

The Hebrew University of Jerusalem, Einstein Institute of Mathematics,
Edmond J. Safra Campus, Givat Ram, Jerusalem 91904, Israel.

E-mail address:  yair.hayut@mail.huji.ac.il

\end{document}